\newtheorem{theorem}{Theorem}
\newtheorem{lemma}{Lemma}	
\newtheorem{remark}{Remark}
\newtheorem{assumption}{Assumption}
\def\bx{\textbf{x}}
\def\bz{\textbf{z}}
\def\by{\textbf{y}}
\title{\LARGE \bf Interior Point Method for Dynamic Constrained Optimization in Continuous Time}
\author{Mahyar Fazlyab, Santiago Paternain, Victor M. Preciado and Alejandro Ribeiro 
	\thanks{The authors are with the Department of Electrical and Systems Engineering, University of Pennsylvania, Philadelphia, PA 19104, USA. Email: \{mahyarfa, spater, preciado, aribeiro\}@seas.upenn.edu.}}
\begin{document}

\maketitle
\thispagestyle{empty}
\pagestyle{empty}

\begin{abstract}
	This paper considers a class of convex optimization problems where both, the objective function and the constraints, have a continuously varying dependence on time. Our goal is to develop an algorithm to track the optimal solution as it continuously changes over time inside or on the boundary of the dynamic feasible set. We develop an interior point method that asymptotically succeeds in tracking this optimal point in nonstationary settings. The method utilizes a time varying constraint slack and a prediction-correction structure that relies on time derivatives of functions and constraints and Newton steps in the spatial domain. Error free tracking is guaranteed under customary assumptions on the optimization problems and time differentiability of objective and constraints. The effectiveness of the method is illustrated in a problem that involves multiple agents tracking multiple targets.
\end{abstract}


\section{Introduction}
In a conventional optimization problem we are given a fixed objective and a fixed constraint and are tasked with finding the optimal argument that minimizes the objective among all feasible variables. In a time varying problem the objective and constraints change continuously in time and we are tasked with tracking the optimal point as it varies over time. These problems arise often in dynamical systems and control because many practical situations involve an objective function or a set of constraints that have dependence on time \cite{aastrom2013adaptive,nelles2001nonlinear,fazlyab2013parameter, hofleitner2011online}. Particular examples include estimation of the path of a stochastic process \cite{jakubiec2013d}, signal detection with adaptive filters \cite{cavalcante2013distributed}, tracking moving targets\cite{zhou2011multirobot}, and scheduling trajectories in an autonomous team of robots. \cite{tu2011mobile}

Methods to solve convex optimization problems -- say, gradient descent, Newton's method, and interior point -- are iterative in nature \cite{boyd2004convex,fiacco1990nonlinear}. When applied to a time varying nonstationary setting, each iteration moves the argument closer to the optimum while the optimum drifts away because of the changing nature of the objective and the constraints. This process is likely to settle into a steady state optimality gap that depends on the relative time constants of the dynamical process and the optimization algorithm. That this is indeed true has been observed and proven for gradient descent in unconstrained optimization \cite{Popkov2005}, as well as in constrained optimization problems that arise in the specific contexts of distributed robotics \cite{zavlanos2013network}, sequential estimation \cite{jakubiec2013d} and distributed optimization with a time varying alternating direction method of multipliers \cite{ling2013decentralized}.  

Alternatively, one can draw inspiration from the prediction-correction structure of Bayesian filters and utilize knowledge of the system's dynamics to predict the drift of the optimal operating point and utilize the descent step of an optimization algorithm to correct the prediction. Variations of this idea have been developed in discrete \cite{simonetto2015class} and continuous \cite{baumann2004newton} time. When used in discrete time, the addition of a prediction step has been shown to reduce the tracking error relative to verbatim use of a descent algorithm \cite{simonetto2015class, simonetto2015prediction}. When used in continuous time, the use of a prediction step and a Newton correction results in perfect tracking of the optimal argument of an \textit{ unconstrained} optimization problem \cite{baumann2004newton,rahili2015distributed}. 

This paper develops an interior point method to track the optimal point of a convex time varying {\it constrained} optimization problem (Section II). Important characteristics of this method are: (i) The use of a time time varying logarithmic barrier akin to the barrier used in static interior point methods. (ii) The use of a time varying constraint slack that is decreased over time and guarantees asymptotic satisfaction of the constraints. (iii) The use of time derivatives that play the role of a prediction step that tries to follow the movement of the optimal argument. (iv) The use of spatial Newton decrements that play the role of a correction step by pushing towards the current optimum. The main contribution of this paper is to show that this method converges to the time varying optimum under mild assumptions (Section III). These assumptions correspond to the customary requirements to prove convergence of interior point methods and differentiability of the objective and constraints with respect to time variations (Theorem 1). It is important to emphasize that our convergence result holds for nonstationary systems and, as such, do not rely on a vanishing rate of change. This implies that the proposed systems succeeds in tracking the optimum without error after a transient phase. The effectiveness of the method is illustrated in a problem that involves multiple agents tracking multiple targets (Section IV).

\textit{Notation and Preliminaries.} Given an n-tuple $(x_1, . . . , x_n)$, $\mathbf{x} \in \mathbb{R}^n$ is the associated vector. We denote as $\mathbf{I}_n$ the n-dimensional identity matrix, as $\mathbb{S}^{n}$ the space of symmetric matrices and as $\mathbb{S}^{n}_{++}$ and $\mathbb{S}^{n}_{+}$ the spaces of positive definite and positive semidefinite matrices, respectively. For square matrices $\mathbf{A}$ and $\mathbf{B}$, we write $\mathbf{A} \succeq \mathbf{B}$ if and only if $\mathbf{A}-\mathbf{B}$ is positive semidefinite. The Euclidean norm of a vector $\mathbf{x}$ is $\|\mathbf{x}\|_2$. The gradient of the function $f(\bx,t) \in \mathbb{R}$ with respect to $\bx \in \mathbb{R}^n$ is denoted by $\nabla_{\bx} f(\bx,t) \in 
\mathbb{R}^n$. The partial derivatives of $\nabla_{\bx} f(\bx,t)$with respect to $\bx$ and $t$ are denoted by $\nabla_{\bx\bx} f(\bx,t) \in \mathbb{S}^n$ and $\nabla_{\bx t} f(\bx,t) \in \mathbb{R}^n$, respectively.


\section{Problem statement}\label{se:statement}
Consider the following constrained convex optimization problem
\begin{align}\label{test}
\bx^{\star}:=& \arg \min_{\bx\in\mathbb{R}^n}  & & f_{0}(\bx) \\ 
&\mbox{s.t.} & & f_{i}(\bx) \leq 0, \quad i\in \{1,\cdots,p\}. \nonumber
\end{align} 
In order to solve \eqref{test}, we can exploit interior point method \cite{fiacco1990nonlinear,potra2000interior} in which we relax the constraints and penalize their violation by logarithmic functions of the form $ -\log\left(-f_i(\bx)\right)$. More specifically, we solve the relaxed problem
\begin{equation}\label{eqn_relaxed_time_invarant}
\bx^{\star}(c):=\arg\min_{\bx\in\mathcal{D}} \, \Phi(\bx,c),
\end{equation}
where the so-called barrier function $\Phi(\bx,c)$ is defined as
\begin{align} \label{eq: time_invariant_barrier}
\Phi(\bx,c)= f_0(\bx) - \frac{1}{c}\sum_{i=1}^p \log(-f_i(\bx)),
\end{align}
and $\mathcal{D}=\{\bx\in \mathbb{R}^n| f_i(\bx)<0, i=1,\cdots,p\}$ is the interior of the feasible domain. Furthermore, $c$ is a positive constant such that $\bx^{\star}(c) \to \bx^{\star}$ as $c \to \infty$. 

To implement the interior point method, the unconstrained optimization problem \eqref{eqn_relaxed_time_invarant} is solved sequentially for a positive growing sequence $\{c_k\}$, each starting from the optimal solution of the previous optimization problem. The resulting sequence $\{\bx^{\star}(c_k)\}$ converges to the optimal point as $c_k \to \infty$. For each fixed $c_k$, $\bx^{\star}(c_k)$ can be found, for instance, by Newton's method as follows
\begin{align} \label{eq: time_invariant_newton_barrier}
\dfrac{d}{dt} \bx(t) = -\nabla_{\bx\bx}^{-1} \Phi(\bx(t),c_{k})\nabla_{\bx}\Phi(\bx(t),c_{k}).
\end{align}
with initial condition $\bx(0)=\bx^{\star}(c_{k-1})$. In order to decrease the total convergence time to the optimal point $\bx^{\star}(t)$, it is appealing to increase $c$ as a function of time, in lieu of discontinuous updates. Is this case, problem \eqref{eq: time_invariant_barrier} would involve a time varying objective function.

In this paper, we consider a more general case in which both the objective function and/or the constraints are time-varying. More formally, we consider the following problem
%
\begin{align} \label{eq: inequality_constrained_time_varying_problem}
\bx^{\star}(t):=& \arg \min_{\bx\in\mathbb{R}^n}  & & f_{0}(\bx,t) \\ 
&\mbox{s.t.} & & f_{i}(\bx,t) \leq 0, \quad i\in \{1,\cdots,p\}. \nonumber
\end{align} 
for all $t \in [0,\infty)$. The ultimate goal is generate a (not necessarily feasible) solution {$\bx(t)$} such that {$\bx(t) \to \bx^{\star}(t)$} as $t \to \infty$, which necessarily enforces asymptotic feasibility. The corresponding time-varying barrier function of \eqref{eq: inequality_constrained_time_varying_problem} becomes
\begin{equation} \label{eq: time_varying_barrier_function}
\Phi(\bx,t) = f_0(\bx,t) - \frac{1}{c(t)}\sum_{i=1}^p \log\left(-f_i(\bx,t)\right), \ \bx\in \mathcal{D}(t)
\end{equation}
where $\mathcal{D}(t):=\{\bx \in \mathbb{R}^n | f_i(\bx,t)<0,\ i=1,\cdots,p\}$ is the interior of the (time varying) feasible region, and $c(t)$ is a positive valued function of time. For minimizing \eqref{eq: time_varying_barrier_function}, we will propose a time-varying Newton differential equation whose solution $\bx(t)$ converges to $\bx^{\star}(t)$ as $t \to \infty$.

For further analysis, we assume that the objective function is strongly convex in $\bx$ and that the constraints are convex in $\bx$ for all times $t\in[0,\infty)$. In addition, we assume that $f_i(\bx,t)$ is continuously differentiable with respect to time for all $i \in \{0,1,\cdots,p\}$. We formalize these assumptions next. 
\begin{assumption} \label{assumption: strong convexity}
	The objective function $f_0(\bx,t)$ and the constraint functions $f_i(\bx,t)$ are twice continuously differentiable with respect to $\bx$ and continuously differentiable with respect to time for all $t\geq 0$. Furthermore, $f_0(\bx,t)$ is uniformly strongly convex in $\bx$, i.e., $\nabla_{\bx\bx} f_0(\bx,t) \succeq m \mathbf{I}$ for some $m>0$ and $f(\bx,t)$ is convex with respect to $\bx$ for all $t\geq 0$.
\end{assumption}
\begin{assumption} \label{assumption: strict_feasibility}
	Slater's condition qualification holds for problem \eqref{eq: inequality_constrained_time_varying_problem} for all $t \geq 0$, i.e., there exits $\bx^\dagger \in \mathbb{R}^n$ such that $f_i(\bx^\dagger,t)<0$ for all $t\geq 0$. 
\end{assumption}

The above assumptions make the convexity-related properties to be invariant over time. With Assumption \ref{assumption: strict_feasibility}, the necessary and sufficient condition for optimality of problem \eqref{eq: inequality_constrained_time_varying_problem} at all times $t\geq 0$ read as
\begin{align} \label{eq: time_varying_KKT_conditions}
\nabla_{\bx} f_0(\bx^{\star}(t),t)+\sum_{i=1}^{p}\lambda_{i}^{\star}(t) \nabla_{\bx} f_{i}(\bx^{\star}(t),t)&=0, \\
\lambda_{i}^{\star}(t) f_{i}(\bx^{\star}(t),t)&=0,\ i \in \{1,\cdots,p\}\nonumber \\
\lambda_{i}^{\star}(t) &\geq 0,\ i \in \{1,\cdots,p\} \nonumber \\
f_{i}(\bx^{\star}(t),t) &\leq 0 \nonumber.
\end{align}
where $\lambda^{\star}(t)=[\lambda_1^{\star}(t),\cdots,\lambda_p^{\star}]^T \in \mathbb{R}^p$ is the vector of optimal dual variables. Finally, we make a further assumption about the time variations of the optimal primal-dual pair.
\begin{assumption} \label{assumption: bounded_in_times}
	For any $\alpha>0$, the optimal dual variables satisfy $\lambda_i^{\star}(t)\exp(-\alpha t)\to 0$ as $t \to \infty$ for all $i \in \{1,\cdots,p\}$.
\end{assumption}

The above assumption excludes the possibility for the optimal dual variables (and hence the optimal primal variables) to escape to infinity exponentially fast. Otherwise, the optimality conditions would become ill-conditioned as $t \to \infty$ and its solution is not tractable in the implementation phase. Equivalent assumtions to  Assumption \ref{assumption: bounded_in_times} are made in similar settings. See e.g. \cite{simonetto2015class}.

Prior to solving the general problem \eqref{eq: inequality_constrained_time_varying_problem}, we start off with unconstrained dynamic convex optimization where the optimization space is the domain of the objective function (Section \ref{sec_unconstrained}). In Section \ref{sec_equality_constraint} we take time varying linear equality constraints into account and in Section \ref{sec_interior_point} we deal with the case of generic dynamic convex constrained optimization problems. In section \ref{sec_numerical_examples} two tracking problems are studied that are posed as time varying optimization problems that can be solved through the techniques developed in Section \ref{sec_main_section}.

\section{Time-Varying Interior Point Method}\label{sec_main_section}
In this section, we develop a time-varying interior point method that solves \eqref{eq: inequality_constrained_time_varying_problem}.  We first introduce time-varying Newton method for unconstrained dynamic optimization problems. Next, we generalize the method to linear equality constraints. Finally, we incorporate time-varying interior point method for the case of inequality constraints.
\subsection{Unconstrained Time-Varying Convex optimization}\label{sec_unconstrained}
In unconstrained time-varying convex optimization the goal is to track, continuously in time, the minimizer of a time-varying convex function. Mathematically speaking, given an objective function $f_0(\bx,t): \mathbb{R}^n \times \mathbb{R}_{+} \to \mathbb{R}$, the goal is estimate the trajectory $\bx^{\star}(t)$ where
\begin{align}  \label{eq: unconstrained_time_varying_problem}
\bx^{\star}(t):=\arg \underset{\bx \in \mathbb{R}^n}{\min} \quad f_{0}(\bx,t).
\end{align} 
The optimal trajectory {$\bx^{\star}(t)$} is characterized by the points where the gradient of {$f_0(\bx,t)$} with respect to $\bx$ is zero, i.e. $\nabla_{\bx} f_0(\bx^{\star}(t),t)\equiv 0$ for all $t \in [0,\infty)$. Using chain rule to differentiate the latter identity with respect to time yields
\begin{align}
 \dfrac{d}{dt} \nabla_{\bx} f_0(\bx^{\star}(t),t)= &\nabla_{\bx\bx} f_0(\bx^{\star}(t),t) \dfrac{d}{dt}\bx^{\star}(t)+\nabla_{\bx t} f_0(\bx^{\star}(t),t).
\end{align}
%
The left hand side of the above equation is identically zero for $t \in [0,\infty)$. It follows that the optimal solution moves with a velocity given by
\begin{align}
\dfrac{d}{dt}\bx^{\star}(t) =-\nabla_{\bx\bx}^{-1} f_0(\bx^{\star}(t),t)\nabla_{\bx t} f_0(\bx^{\star}(t),t).
\end{align}
The above observation suggests that the tracking trajectory should evolve with (approximately) the same velocity as the minimizer trajectory, while taking a descent direction at the same time in order to get closer to the optimal trajectory. If Newton-method is chosen as the descent direction, the resulting time-varying Newton method takes the form 
\begin{align} \label{eq: time_varying_newton}
	\dot{\bx}(t)=-{\nabla_{\bx\bx}^{-1} f_0(\bx(t),t)}[\mathbf{P}\nabla_{\bx} f_0(\bx(t),t)+\nabla_{\bx t} f_0(\bx(t),t)],
\end{align}
where $\mathbf{P}$ is a positive definite matrix. The next lemma shows that the solution of the dynamical system \eqref{eq: time_varying_newton} converges exponentially to the solution to the unconstrained minimization problem \eqref{eq: unconstrained_time_varying_problem}.
\begin{lemma} \label{lem: Time_Varying_Newton}
	Let $\bx^{\star}(t)$ be defined as in \eqref{eq: unconstrained_time_varying_problem} and $\bx(t)$ be the solution of the differential equation \eqref{eq: time_varying_newton}
	with $\mathbf{P} \in \mathbb{S}^{n}_{++}$ satisfying $\mathbf{P}\succeq\sigma{\mathbf{I}}$. Then, the following inequality holds
	\[
	\|\bx(t)-\bx^{\star}(t)\|_2 \leq C(\bx_0,m)e^{-\sigma t}, 
	\]
	where $\bx_0$ is an arbitrary initial point and $0\leq C(\bx_0,m)<\infty$.
	\begin{proof}
		The proof is found in Appendix subsection \ref{ap: time_varying_unconstrained_newton}.
	\end{proof}
\end{lemma}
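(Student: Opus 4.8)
The plan is to not track the distance $\|\bx(t)-\bx^{\star}(t)\|_2$ directly, but instead to track the residual gradient $\mathbf{g}(t) := \nabla_{\bx} f_0(\bx(t),t)$ evaluated along the trajectory generated by \eqref{eq: time_varying_newton}. The key observation is that the dynamics \eqref{eq: time_varying_newton} were engineered precisely so that $\mathbf{g}(t)$ obeys a clean autonomous linear differential equation, after which exponential decay of $\mathbf{g}(t)$ transfers to exponential decay of $\bx(t)-\bx^{\star}(t)$ through strong convexity.

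First I would differentiate $\mathbf{g}(t)$ along the trajectory using the chain rule, exactly as was done above for the optimal path, to obtain
\begin{align}
\dot{\mathbf{g}}(t) = \nabla_{\bx\bx} f_0(\bx(t),t)\,\dot{\bx}(t) + \nabla_{\bx t} f_0(\bx(t),t). \nonumber
\end{align}
Substituting the right-hand side of \eqref{eq: time_varying_newton} for $\dot{\bx}(t)$, the Hessian cancels its own inverse, and the two $\nabla_{\bx t} f_0$ terms cancel as well, leaving the autonomous linear system $\dot{\mathbf{g}}(t) = -\mathbf{P}\,\mathbf{g}(t)$. This cancellation is the heart of the argument and is precisely why the prediction term $\nabla_{\bx t} f_0$ was included in the flow. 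Note that Assumption \ref{assumption: strong convexity} guarantees $\nabla_{\bx\bx} f_0 \succeq m\mathbf{I}$, so the Hessian inverse appearing in \eqref{eq: time_varying_newton} is well defined everywhere and the right-hand side is locally Lipschitz under the stated smoothness, which I would invoke to assert existence and uniqueness of $\bx(t)$ on $[0,\infty)$.

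Next I would bound the decay of $\mathbf{g}(t)$ via a Lyapunov function $V(t) = \tfrac{1}{2}\|\mathbf{g}(t)\|_2^2$. Differentiating and using $\dot{\mathbf{g}} = -\mathbf{P}\mathbf{g}$ together with $\mathbf{P}\succeq\sigma\mathbf{I}$ gives $\dot{V}(t) = -\mathbf{g}(t)^{T}\mathbf{P}\,\mathbf{g}(t) \leq -2\sigma V(t)$, hence $\|\mathbf{g}(t)\|_2 \leq e^{-\sigma t}\|\mathbf{g}(0)\|_2$ by the comparison lemma. Finally, since $\nabla_{\bx} f_0(\bx^{\star}(t),t) = 0$ and $f_0$ is $m$-strongly convex in $\bx$, the standard monotonicity estimate $(\nabla_{\bx} f_0(\bx,t)-\nabla_{\bx} f_0(\by,t))^{T}(\bx-\by) \geq m\|\bx-\by\|_2^2$ combined with Cauchy--Schwarz yields $\|\mathbf{g}(t)\|_2 = \|\nabla_{\bx} f_0(\bx(t),t)-\nabla_{\bx} f_0(\bx^{\star}(t),t)\|_2 \geq m\|\bx(t)-\bx^{\star}(t)\|_2$. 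Chaining these gives $\|\bx(t)-\bx^{\star}(t)\|_2 \leq \tfrac{1}{m}e^{-\sigma t}\|\nabla_{\bx} f_0(\bx_0,0)\|_2$, which is the claimed bound with $C(\bx_0,m) = \tfrac{1}{m}\|\nabla_{\bx} f_0(\bx_0,0)\|_2$.

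I do not expect a serious obstacle in this proof; the structure is unusually favorable because the nonlinear flow linearizes exactly in the gradient coordinate. The only points requiring care are technical rather than conceptual: justifying that the solution $\bx(t)$ exists for all $t\geq 0$ (the exponential gradient bound rules out finite-time escape, so global existence follows once local existence is established from the Lipschitz right-hand side), and confirming that the Hessian inverse is globally well posed (immediate from uniform strong convexity). The remaining inequalities are routine consequences of strong convexity and the comparison lemma.
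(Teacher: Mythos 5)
Your proof is correct and follows essentially the same route as the paper: both differentiate the gradient $\mathbf{g}(t)=\nabla_{\bx} f_0(\bx(t),t)$ along the flow, use the exact cancellation built into \eqref{eq: time_varying_newton} to obtain the linear system $\dot{\mathbf{g}}(t)=-\mathbf{P}\mathbf{g}(t)$, and transfer the resulting exponential decay to $\|\bx(t)-\bx^{\star}(t)\|_2$ via strong convexity. The only differences are cosmetic: you integrate the linear system with a Lyapunov/comparison argument rather than the paper's matrix-exponential norm bound (your version in fact avoids the paper's slightly imprecise claim that $\|e^{-\mathbf{P}t}\|_2 = e^{-\sigma t}$), and your monotonicity estimate yields the sharper constant $C(\bx_0,m)=\tfrac{1}{m}\|\nabla_{\bx} f_0(\bx_0,0)\|_2$ in place of the paper's $\tfrac{2}{m}\|\nabla_{\bx} f_0(\bx_0,0)\|_2$.
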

The previous lemma confirms that the trajectory generated by \eqref{eq: time_varying_newton} converges exponentially to the optimal trajectory and therefore it is possible to solve the unconstrained time-varying optimization problem \eqref{eq: unconstrained_time_varying_problem} by discretization of the dynamical system \eqref{eq: time_varying_newton}. Next, we show that the same dynamical system allows us to solve dynamic optimization problems with equality constraints by augmenting the state space with the Lagrange multipliers associated with the equality constraints. 

\subsection{Equality-Constraint Time-Varying Convex Optimization}\label{sec_equality_constraint}
Consider the problem of tracking the minimizer of a convex time-varying objective function subject to linear constraints. More precisely, we seek to estimate $\bx^{\star}(t)$ for all $t\in [0,\infty)$ where
{\begin{align}  \label{eq: equality_constraint_time_varying_problem}
\bx^{\star}(t):=&\arg \underset{\bx \in \mathbb{R}^n}{\min} & & f_{0}(\bx,t) \\
&\mbox{s.t.} & & \mathbf{A}(t)\bx=\textbf{b}(t), \nonumber
\end{align} 
with $\mathbf{A}(t) \in \mathbb{R}^{p\times n}$ and $\mbox{rank}(\mathbf{A}(t))=p<n$ for all $t \geq 0$. The Lagrangian relaxation of \eqref{eq: equality_constraint_time_varying_problem} is 
\begin{align} \label{eq: time_varying_equality_lagrangian}
\mathcal{L}(\bx,\lambda,t)=f_0(\bx,t)+\lambda^T(\mathbf{A}(t)\bx-\textbf{b}(t)).
\end{align}
where $\lambda \in \mathbb{R}^p$ is the vector of Lagrange multipliers. The optimal trajectory $(\bx^{\star}(t),\lambda^{\star}(t))$ constitutes of points where the gradient of the Lagrangian with respect to both $\bx$ and $\lambda$ vanishes. Therefore, by extending the state space to be $\bz :=[ \bx^T \ \lambda^T]^T \in \mathbb{R}^{n+p}$, the optimal solution $\bz^{\star}(t)$ is characterized by
\begin{equation}
\nabla_{\bz} \mathcal{L} (\bz^{\star}(t),t) = 0, \ t \geq 0.
\end{equation}
Similar to the unconstrained case, the tracking trajectory should compose of two directions: a velocity compensating direction and a descent direction. The next lemma addresses the algorithm for tracking $\bx^{\star}(t)$ in \eqref{eq: equality_constraint_time_varying_problem}.

\begin{lemma} \label{lemma: equality_constrained_time_varyging_newton}
Denote $\bz(t)=[\bx(t)^T \ \lambda(t)^T]^T$ as the solution of the following differential equation
\begin{align} \label{eq: time_varyin_equality_newton_method}
\dfrac{d}{dt} \bz(t)= -\nabla_{\bz\bz}^{-1} \mathcal{L}(\bz(t),t)(\mathbf{P} \nabla_{\bz}\mathcal{L}(\bz(t),t)+\nabla_{\bz t}\mathcal{L}(\bz(t),t))
\end{align}
where $\mathcal{L}$ is defined in \eqref{eq: time_varying_equality_lagrangian} and $\mathbf{P}\in \mathbb{S}^{n+p}$ is a positive definite matrix satisfying $\mathbf{P} \succeq \sigma \mathbf{I}_{n+p}$. Then, the following inequality holds
\begin{align} \label{eq: time_varying_equality_constraint_convergence_bound}
\|\bx(t)-\bx^{\star}(t)\|_2^{2}+\|\lambda(t)-\lambda^{\star}(t)\|_2^{2} \leq C(\bx_0,\lambda_0,m) e^{-2\sigma t}.
\end{align}
where $\bx_0 \in \mathbb{R}^n$ and $\lambda_0 \in \mathbb{R}^p$ are arbitrary initial points and $0 \leq C(\bx_0,\lambda_0,m)<\infty$.
\end{lemma}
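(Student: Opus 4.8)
The plan is to treat the augmented variable $\bz=[\bx^T\ \lambda^T]^T$ exactly as the unconstrained case of Lemma~\ref{lem: Time_Varying_Newton} treats $\bx$, using the Lagrangian $\mathcal{L}(\bz,t)$ in place of $f_0(\bx,t)$ and the fact that the optimal trajectory is characterized by stationarity, $\nabla_{\bz}\mathcal{L}(\bz^{\star}(t),t)\equiv 0$. First I would introduce the gradient residual $\mathbf{g}(t):=\nabla_{\bz}\mathcal{L}(\bz(t),t)$ and differentiate it along trajectories. By the chain rule $\dot{\mathbf{g}}(t)=\nabla_{\bz\bz}\mathcal{L}(\bz(t),t)\,\dot{\bz}(t)+\nabla_{\bz t}\mathcal{L}(\bz(t),t)$; substituting the dynamics \eqref{eq: time_varyin_equality_newton_method} makes the Hessian cancel its inverse and the $\nabla_{\bz t}\mathcal{L}$ terms cancel each other, yielding the linear autonomous residual dynamics $\dot{\mathbf{g}}(t)=-\mathbf{P}\,\mathbf{g}(t)$. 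This mirrors Lemma~\ref{lem: Time_Varying_Newton} and requires only that $\nabla_{\bz\bz}\mathcal{L}$ be invertible along the trajectory, which holds because the KKT matrix built from the strongly convex block $\nabla_{\bx\bx}f_0\succeq m\mathbf{I}$ and the full row rank $\mathbf{A}(t)$ is nonsingular. Using the Lyapunov function $V(t)=\tfrac12\|\mathbf{g}(t)\|_2^2$ and $\mathbf{P}\succeq\sigma\mathbf{I}$ I then get $\dot{V}=-\mathbf{g}^T\mathbf{P}\mathbf{g}\leq-2\sigma V$, hence $\|\mathbf{g}(t)\|_2\leq\|\mathbf{g}(0)\|_2\,e^{-\sigma t}$, with $\|\mathbf{g}(0)\|_2$ fixed by the initial condition $(\bx_0,\lambda_0)$.

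The crux — and the step I expect to be the main obstacle — is converting the exponential decay of the residual $\mathbf{g}(t)$ into the claimed decay of the error $\|\bz(t)-\bz^{\star}(t)\|_2$. In the unconstrained case strong convexity delivers this immediately, but here $\mathcal{L}$ is a saddle function (affine in $\lambda$), so $\nabla_{\bz\bz}\mathcal{L}$ is indefinite and no single convexity constant applies. My plan is to exploit the block structure. Writing $\mathbf{e}_x=\bx-\bx^{\star}$, $\mathbf{e}_\lambda=\lambda-\lambda^{\star}$, using $\mathbf{A}(t)\bx^{\star}(t)=\textbf{b}(t)$, and using the integrated mean-value identity $\nabla_{\bx}f_0(\bx,t)-\nabla_{\bx}f_0(\bx^{\star},t)=\mathbf{G}(t)\mathbf{e}_x$ with $\mathbf{G}(t):=\int_0^1\nabla_{\bx\bx}f_0(\bx^{\star}+s\mathbf{e}_x,t)\,ds\succeq m\mathbf{I}$, the residual components read $\nabla_{\lambda}\mathcal{L}=\mathbf{A}(t)\mathbf{e}_x$ and $\nabla_{\bx}\mathcal{L}=\mathbf{G}(t)\mathbf{e}_x+\mathbf{A}(t)^T\mathbf{e}_\lambda$.

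To close the estimate I would split $\mathbf{e}_x$ into its components in the row space and the null space of $\mathbf{A}(t)$: the first equation controls the row-space component through the smallest singular value of $\mathbf{A}(t)$, a strong-convexity estimate (testing $\nabla_{\bx}\mathcal{L}$ against the null-space component, where the $\mathbf{A}(t)^T\mathbf{e}_\lambda$ term drops out) controls the null-space part of $\mathbf{e}_x$, and injectivity of $\mathbf{A}(t)^T$ then controls $\mathbf{e}_\lambda$. Chaining these gives $\|\mathbf{e}_x\|_2+\|\mathbf{e}_\lambda\|_2\leq\kappa\,\|\mathbf{g}(t)\|_2$ for a constant $\kappa$ depending on $m$, on a uniform upper bound on $\nabla_{\bx\bx}f_0$, and on a uniform lower bound on the smallest singular value of $\mathbf{A}(t)$; squaring and absorbing constants into $C(\bx_0,\lambda_0,m)$ then produces \eqref{eq: time_varying_equality_constraint_convergence_bound}. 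The delicate point is guaranteeing that $\kappa$ is uniform in $t$, which is exactly where the uniform conditioning of the KKT operator — boundedness of the Hessian and $\sigma_{\min}(\mathbf{A}(t))$ bounded away from zero — must be invoked, since otherwise the indefinite KKT system could degenerate as $t\to\infty$ and the residual-to-error bound would fail.
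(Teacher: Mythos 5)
Your first half is exactly the paper's argument: the paper also notes invertibility of the KKT matrix $\nabla_{\bz\bz}\mathcal{L}$ (attributing it to strong convexity alone; as you correctly point out, full row rank of $\mathbf{A}(t)$ is also needed) and then simply states that the rest follows the steps of Lemma~\ref{lem: Time_Varying_Newton} under the substitutions $\nabla_{\bx}f_0 \leftarrow \nabla_{\bz}\mathcal{L}$ and $\bx \leftarrow \bz$; your Lyapunov derivation of $\|\nabla_{\bz}\mathcal{L}(\bz(t),t)\|_2 \le \|\nabla_{\bz}\mathcal{L}(\bz(0),0)\|_2\, e^{-\sigma t}$ is the same computation, phrased with $V=\tfrac12\|\mathbf{g}\|_2^2$ instead of the explicit solution $e^{-\mathbf{P}t}\mathbf{g}(0)$. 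Where you genuinely diverge is the residual-to-error step, and here your proof does work the paper's proof omits. The paper's reduction implicitly reuses the strong-convexity inequality $\|\bz-\bz^{\star}\|_2 \le \tfrac{2}{m}\|\nabla_{\bz}\mathcal{L}\|_2$ from Lemma~\ref{lem: Time_Varying_Newton}, but, exactly as you observe, $\mathcal{L}$ is a saddle function, affine in $\lambda$, so no such bound with a constant depending only on $m$ can hold: take $f_0(\bx,t)=\tfrac{m}{2}\|\bx\|_2^2$ with the single constraint $\epsilon x_1=0$; at $\bx=0$, $\lambda=1$ the residual has norm $\epsilon$ while the dual error is $1$, so the conversion constant necessarily blows up as $\epsilon\to 0$. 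Your block decomposition (mean-value matrix $\mathbf{G}(t)\succeq m\mathbf{I}$, splitting $\mathbf{e}_x$ along the null space and row space of $\mathbf{A}(t)$, then recovering $\mathbf{e}_\lambda$ from injectivity of $\mathbf{A}(t)^T$) supplies the missing argument, and your closing caveat is the honest price: $\kappa$ requires a uniform upper bound on $\nabla_{\bx\bx}f_0$ and a uniform lower bound on $\sigma_{\min}(\mathbf{A}(t))$, neither of which the paper assumes ($\mbox{rank}(\mathbf{A}(t))=p$ for each $t$ does not prevent $\sigma_{\min}(\mathbf{A}(t))\to 0$), and the resulting constant then depends on these quantities rather than only on $(\bx_0,\lambda_0,m)$ as written in \eqref{eq: time_varying_equality_constraint_convergence_bound}. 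In short, the paper buys brevity with a two-line reduction that hides an unjustified step; you buy a valid proof at the cost of explicitly strengthened, and correctly identified, uniform conditioning hypotheses.
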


\begin{proof}
	See Appendix subsection \ref{ap: time_varying_equality_newton}.
\end{proof}

Notice that the dynamical system \eqref{eq: time_varyin_equality_newton_method} needs not to start from a feasible point, i.e. it is not required that $\mathbf{A}_0\bx_0=\mathbf{b}_0$. However, feasibility is achieved exponentially fast by \eqref{eq: time_varying_equality_constraint_convergence_bound}. 

In the sequel, we consider the most general case with time-varying inequality constraints.  Without loss of generally, we omit linear equality constraints as each single affine equality constraint can be expressed as two convex inequality constraints.
\subsection{Time-Varying Interior-point method}\label{sec_interior_point}
In this section, we return to the general optimization problem \eqref{eq: inequality_constrained_time_varying_problem} with the associated barrier function \eqref{eq: time_varying_barrier_function}. We will show that the same differential equation developed in Section \ref{sec_unconstrained} -- using the barrier function $\Phi(\bx,t)$ defined in \eqref{eq: time_varying_barrier_function} in lieu of $f_0(\bx,t)$ in \eqref{eq: time_varying_newton} -- pushes the generated solution to the optimal trajectory when the barrier parameter $c(t) \to \infty$. Formally, the dynamical system of interest is 
\begin{align} \label{eq: time_varying_newton_barrier}
\dot{\bx}(t)=-\nabla_{\bx\bx}^{-1}\Phi(\bx(t),t) \left( \mathbf{P}\nabla_{\bx}\Phi(\bx(t),t)+\nabla_{\bx t}\Phi(\bx(t),t) \right),
\end{align}
where $\mathbf{P} \in \mathbb{S}^n_{++}$. Notice, however, that the Newton method in this case needs to start from a strictly feasible point, i.e. $\bx_0 \in \mathcal{D}_0$. This limitation is not desirable, as it is not always straightforward to find such an initial condition. This restriction can be overcome by expanding the feasible region at $t=0$ by a slack variable, denoted by $s$, and shrink it to the real feasible set over time. More precisely, we perturb problem \eqref{eq: inequality_constrained_time_varying_problem} at each time $t \in [0,\infty)$ by $s(t): \mathbb{R}_{+} \to \mathbb{R}_{++}$ as follows
\begin{align} \label{eq: perturbed_inequality_constrained_time_varying_problem}
\tilde{\bx}^{\star}(t):=& \arg \min_{\bx\in\mathbb{R}^n}  & & f_{0}(\bx,t) \\ 
&\mbox{s.t.} & & f_{i}(\bx,t) \leq s(t), \quad i\in \{1,\cdots,p\}. \nonumber
\end{align} 
It can be observed that the feasible region is enlarged for $s(t)\geq0$. In particular, at time $t=0$, any initial point $\bx_0 \in \mathbb{R}^n$ can be made feasible by choosing the initial $s_0=s(0)$ large enough. Another consequence of such perturbation is that, the optimal value of the perturbed problem \eqref{eq: perturbed_inequality_constrained_time_varying_problem} is no larger than the optimal value at $\bx^{\star}(t)$. The next lemma formalizes this observation.
\begin{lemma} \label{lemma: pertubation_suboptimality_bounds}
	Let $\bx^{\star}(t)$ be defined as in \eqref{eq: inequality_constrained_time_varying_problem} and $\tilde{\bx}^{\star}(t)$ as in \eqref{eq: perturbed_inequality_constrained_time_varying_problem}. Then, the following inequality holds:
	\begin{align} \label{eq: pertubation_suboptimality_bounds}
	0 \leq f_0(\bx^{\star}(t),t)-f_0(\tilde{\bx}^{\star}(t),t) \leq \sum_{i=1}^{p} \lambda^{\star}_i(t) s(t)
	\end{align}
	where $\lambda_i^{\star}(t),\ i\in \{1,\cdots,p\}$ are optimal dual variables defined in \eqref{eq: time_varying_KKT_conditions}.
	
	\begin{proof}
		See Appendix subsection \ref{ap_perturbation_suboptimality_bound}.
	\end{proof}	
\end{lemma}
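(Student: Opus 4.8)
The plan is to prove the two inequalities separately, holding the time $t$ fixed throughout; since the claimed bound is pointwise in $t$, no time derivatives enter and the argument is purely a static convex-duality computation repeated at each instant.

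For the lower bound I would argue by feasible-set containment. Because $s(t)\geq 0$, any $\bx$ with $f_i(\bx,t)\leq 0$ for all $i$ also satisfies $f_i(\bx,t)\leq s(t)$, so the feasible set of the original problem \eqref{eq: inequality_constrained_time_varying_problem} is contained in that of the perturbed problem \eqref{eq: perturbed_inequality_constrained_time_varying_problem}. Minimizing the same objective $f_0(\cdot,t)$ over a larger set cannot increase the optimal value, hence $f_0(\tilde{\bx}^{\star}(t),t)\leq f_0(\bx^{\star}(t),t)$, which is exactly $0\leq f_0(\bx^{\star}(t),t)-f_0(\tilde{\bx}^{\star}(t),t)$.

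For the upper bound I would invoke Lagrangian duality of the \emph{original} problem. Introduce the dual function
\[
g(\lambda,t)=\inf_{\bx\in\mathbb{R}^n}\Big(f_0(\bx,t)+\sum_{i=1}^{p}\lambda_i f_i(\bx,t)\Big).
\]
By the convexity in Assumption~\ref{assumption: strong convexity} and Slater's condition in Assumption~\ref{assumption: strict_feasibility}, strong duality holds at every fixed $t$, so the optimal multipliers $\lambda^{\star}(t)\geq 0$ from the KKT system \eqref{eq: time_varying_KKT_conditions} attain the dual optimum and $g(\lambda^{\star}(t),t)=f_0(\bx^{\star}(t),t)$. Since the infimum defining $g$ is a lower bound over all $\bx$, evaluating it at the particular point $\tilde{\bx}^{\star}(t)$ gives
\[
f_0(\bx^{\star}(t),t)=g(\lambda^{\star}(t),t)\leq f_0(\tilde{\bx}^{\star}(t),t)+\sum_{i=1}^{p}\lambda_i^{\star}(t)\,f_i(\tilde{\bx}^{\star}(t),t).
\]
Feasibility of $\tilde{\bx}^{\star}(t)$ for the perturbed problem gives $f_i(\tilde{\bx}^{\star}(t),t)\leq s(t)$, and because each $\lambda_i^{\star}(t)\geq 0$ the sum is bounded above by $\sum_{i=1}^{p}\lambda_i^{\star}(t)\,s(t)$. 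Rearranging yields the stated upper bound.

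The only delicate step is the appeal to strong duality, i.e. the claim that the KKT multipliers $\lambda^{\star}(t)$ actually close the duality gap; this is precisely what Slater's condition together with convexity is designed to guarantee, so Assumptions~\ref{assumption: strong convexity}--\ref{assumption: strict_feasibility} supply exactly what is needed. Everything else is a monotonicity argument for the lower bound and the elementary fact that the dual function lower-bounds the primal objective at any feasible point for the upper bound.
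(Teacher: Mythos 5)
Your proof is correct and follows essentially the same route as the paper's: both establish the lower bound by feasible-set enlargement and the upper bound by evaluating the dual function of the original problem at $\tilde{\bx}^{\star}(t)$, invoking strong duality $g(\lambda^{\star}(t),t)=f_0(\bx^{\star}(t),t)$ and then bounding $\sum_i \lambda_i^{\star}(t) f_i(\tilde{\bx}^{\star}(t),t)\leq \sum_i \lambda_i^{\star}(t)s(t)$ via feasibility and nonnegativity of the multipliers. If anything, your write-up is slightly more careful than the paper's, since you state explicitly where $\lambda_i^{\star}(t)\geq 0$ is used and where Slater's condition enters.
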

The above lemma asserts that the sub-optimality of the perturbed solution $\tilde{\bx}^{\star}(t)$ is controlled by $s(t)$. More importantly, as a result of Assumption \ref{assumption: bounded_in_times}, the sub-optimality can be pushed to zero if $s(t)=s_0 \exp(-\alpha t)$ for any $\alpha>0$. 

The barrier function associated with the problem \eqref{eq: perturbed_inequality_constrained_time_varying_problem} is
\begin{align} \label{eq: perturbed_time_varying_barrier_function}
\tilde{\Phi}(\bx,t) = f_0(\bx,t) - \frac{1}{c(t)}\sum_{i=1}^p \log\left(s(t)-f_i(\bx,t)\right), \ \bx\in \mathcal{\tilde{D}}(t)
\end{align}
where $\tilde{\mathcal{D}}(t):=\{x \in \mathbb{R}^n\ | f_i(\bx,t)<s(t),\ i=1,\cdots,p \}$ is the perturbed domain. For any initial point $\bx_0 \in \mathbb{R}^n$, $s_0$ can be chosen large enough such that $x_0 \in \mathcal{\tilde{D}}_0$. More precisely, we must have that
\begin{align} \label{eq: slack_initial_value}
s_0=\begin{cases} 0 & \mbox{if} \ \max_{i} \ f_i(\bx_0,0) \leq 0 \\
\max_{i} f_i(\bx_0,0)+\varepsilon & \mbox{if} \ \max_{i} \ f_i(\bx_0,0)> 0
\end{cases}
\end{align}
for some $\varepsilon>0$. Denote by $\tilde{\bz}^{\star}(t)$ as the minimizer of the perturbed barrier function \eqref{eq: perturbed_time_varying_barrier_function}, i.e.
\begin{align} \label{eq: perturbed_inequality_time_varying_minimizer}
\tilde{\bz}^{\star}(t) := & \arg \underset{\bx \in \mathcal{\tilde{D}}(t)}{\min}  \tilde{\Phi}(\bx,t)
\end{align} 
For solving \eqref{eq: perturbed_inequality_time_varying_minimizer}, we can now apply time-varying Newton method  to get the following differential equation
\begin{align} \label{eq: perturbed_time_varying_newton_barrier}
\dot{\tilde{\bz}}(t)=-\nabla_{{\bx}{\bx}}^{-1}\tilde{\Phi}(\tilde{\bz}(t),t) \left( \mathbf{P}\nabla_{{\bx}}\tilde{\Phi}(\tilde{\bz}(t),t)+\nabla_{{\bx} t}\tilde{\Phi}(\tilde{\bz}(t),t) \right),
\end{align}
By increasing the barrier parameter $c(t)$ over time the sub-optimality is decreased as we show in the next lemma.
\begin{lemma}\label{lemma_suboptimality_bound}
	With $\tilde{\bz}^{\star}(t)$ defined as in \eqref{eq: perturbed_inequality_time_varying_minimizer} and $\tilde{\bx}^{\star}(t)$ as in \eqref{eq: perturbed_inequality_constrained_time_varying_problem}, the following inequality holds for all $t \geq 0$
	\begin{align} \label{eq: suboptimality_bound}
	0 \leq f_0(\tilde{\bz}^{\star}(t),t)-f_0(\tilde{\bx}^{\star}(t),t) \leq \dfrac{p}{c(t)}
	\end{align} 
\end{lemma}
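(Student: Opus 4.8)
The plan is to adapt the classical central-path duality-gap argument of static interior point methods to the perturbed problem \eqref{eq: perturbed_inequality_constrained_time_varying_problem}, carried out pointwise in time. Throughout I fix an arbitrary $t\geq 0$ and, to lighten notation, write $s=s(t)$, $c=c(t)$, and $f_i(\bx)=f_i(\bx,t)$. The lower bound in \eqref{eq: suboptimality_bound} is immediate and I would dispose of it first: since $\tilde{\bz}^{\star}(t)\in\tilde{\mathcal{D}}(t)$ satisfies $f_i(\tilde{\bz}^{\star}(t))<s$ for all $i$, it is feasible for \eqref{eq: perturbed_inequality_constrained_time_varying_problem}, and therefore its objective value cannot be smaller than the optimum $f_0(\tilde{\bx}^{\star}(t),t)$. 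The substance of the lemma is the upper bound, which I obtain by exhibiting a dual-feasible point whose dual value lies exactly $p/c$ below the primal value at $\tilde{\bz}^{\star}(t)$.

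The key steps, in order, are as follows. First, I write down first-order stationarity for the barrier minimizer: because $\tilde{\bz}^{\star}(t)$ minimizes the smooth convex barrier \eqref{eq: perturbed_time_varying_barrier_function} over the open set $\tilde{\mathcal{D}}(t)$, we have $\nabla_{\bx}\tilde{\Phi}(\tilde{\bz}^{\star}(t),t)=0$, which upon differentiating the logarithmic terms reads
\[
\nabla_{\bx} f_0(\tilde{\bz}^{\star}(t)) + \sum_{i=1}^{p}\frac{1}{c\,(s-f_i(\tilde{\bz}^{\star}(t)))}\,\nabla_{\bx} f_i(\tilde{\bz}^{\star}(t)) = 0.
\]
Second, I define candidate multipliers $\tilde{\lambda}_i := \bigl[c\,(s-f_i(\tilde{\bz}^{\star}(t)))\bigr]^{-1}$; since $s-f_i(\tilde{\bz}^{\star}(t))>0$ on the interior and $c>0$, each $\tilde{\lambda}_i>0$, so $\tilde{\lambda}$ is dual feasible. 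Third, I recognize the displayed identity as $\nabla_{\bx} L(\tilde{\bz}^{\star}(t),\tilde{\lambda},t)=0$ for the Lagrangian $L(\bx,\lambda,t)=f_0(\bx)+\sum_{i}\lambda_i\bigl(f_i(\bx)-s\bigr)$; by Assumption \ref{assumption: strong convexity}, $f_0$ is strongly convex and each $f_i$ is convex, so $L(\cdot,\tilde{\lambda},t)$ is convex in $\bx$ and stationarity implies global minimality, whence the dual value equals $L(\tilde{\bz}^{\star}(t),\tilde{\lambda},t)$. Fourth, each complementary term collapses to $\tilde{\lambda}_i(f_i(\tilde{\bz}^{\star}(t))-s)=-1/c$, so the dual value is exactly $f_0(\tilde{\bz}^{\star}(t))-p/c$. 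Finally, weak duality bounds this below the perturbed optimum, $f_0(\tilde{\bz}^{\star}(t))-p/c\leq f_0(\tilde{\bx}^{\star}(t),t)$, which rearranges to the claimed upper bound.

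I do not expect a genuine obstacle here, as this is the standard barrier duality-gap estimate specialized to each time instant. The only points that deserve care are confirming that the constructed multipliers are legitimately nonnegative and that stationarity of the convex Lagrangian yields a \emph{global} minimizer, so that the dual-function evaluation is exact rather than merely a lower bound on $L$; both follow cleanly from the strict sign $s-f_i(\tilde{\bz}^{\star}(t))>0$ in the interior and from the convexity postulated in Assumption \ref{assumption: strong convexity}. It is worth noting that only weak duality is invoked, so the bound holds uniformly in $t$ without appealing to Slater's condition (Assumption \ref{assumption: strict_feasibility}); the latter is needed elsewhere for the existence and characterization of the optimal dual pair, but not for this particular estimate.
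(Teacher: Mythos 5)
Your proof is correct and takes essentially the same route as the paper's: first-order stationarity of the barrier minimizer produces the multipliers $\tilde{\lambda}_i = \bigl[c\,(s-f_i(\tilde{\bz}^{\star}(t)))\bigr]^{-1}$, the dual value at these multipliers collapses to exactly $f_0(\tilde{\bz}^{\star}(t),t)-p/c(t)$, and weak duality against the perturbed optimum closes the argument. If anything, you are slightly more careful than the paper, which leaves the lower bound implicit and asserts, rather than justifies via convexity of the Lagrangian, that $\tilde{\bz}^{\star}(t)$ attains the minimum defining the dual function.
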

\begin{proof}
The proof is provided in Appendix subsection \ref{ap_suboptimality_bound}.
\end{proof}
	The bound  \eqref{eq: suboptimality_bound} quantifies the sub-optimality of $\tilde{\bz}^{\star}(t)$ with respect to the perturbed optimal trajectory $\tilde{\bx}^{\star}(t)$. More specifically, as $c(t) \to \infty$, $\tilde{\bz}^{\star}(t) \to \tilde{\bx}^{\star}(t)$. The next lemma established this convergence under the dynamics \eqref{eq: perturbed_time_varying_newton_barrier}.

\begin{lemma}	\label{lemma: time_varying_barrier_convergence}
	Consider problem \eqref{eq: perturbed_inequality_constrained_time_varying_problem} with the corresponding time-varying barrier function \eqref{eq: perturbed_time_varying_barrier_function}. Let $\tilde{\bz}(t)$ be the solution of the differential equation \eqref{eq: perturbed_time_varying_newton_barrier} with $\mathbf{P} \in \mathbb{S}^{n}_{++}$ satisfying $\mathbf{P}\succeq\sigma{\mathbf{I}}$, and initial condition $\bx_0 \in \mathbb{R}^n$. Finally, let $s_0$ be chosen as in \eqref{eq: slack_initial_value}. Then, the following inequality holds
	\begin{align} \label{eq: time_varying_barrier_convergence}
	\|\tilde{\bz}(t)-\tilde{\bz}^{\star}(t)\|_2 \leq C(\bx_0,c_0,s_0,m)e^{-\sigma t}
	\end{align}
	where $0 \leq C(\bx_0,c_0,s_0,m)<\infty$.
	
	\begin{proof}
		The proof is provided in Appendix subsection \ref{ap: lemma_time_varying_barrier_convergence}. 
	\end{proof}
\end{lemma}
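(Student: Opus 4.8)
The plan is to recognize that the dynamics \eqref{eq: perturbed_time_varying_newton_barrier} are precisely the time-varying Newton dynamics of Lemma \ref{lem: Time_Varying_Newton}, with the barrier $\tilde{\Phi}(\bx,t)$ playing the role of the objective $f_0(\bx,t)$. Consequently the whole argument reduces to verifying that $\tilde{\Phi}$ inherits the hypotheses of Lemma \ref{lem: Time_Varying_Newton} — uniform strong convexity in $\bx$ and the requisite differentiability — on the region traversed by the trajectory, and then transporting its conclusion.

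First I would establish that $\tilde{\Phi}(\cdot,t)$ is strongly convex with the same modulus $m$ as $f_0$, uniformly in $t$. Differentiating \eqref{eq: perturbed_time_varying_barrier_function} twice in $\bx$ gives
\[
\nabla_{\bx\bx}\tilde{\Phi}(\bx,t)=\nabla_{\bx\bx} f_0(\bx,t)+\frac{1}{c(t)}\sum_{i=1}^{p}\left(\frac{\nabla_{\bx\bx} f_i(\bx,t)}{s(t)-f_i(\bx,t)}+\frac{\nabla_{\bx} f_i(\bx,t)\,\nabla_{\bx} f_i(\bx,t)^{T}}{(s(t)-f_i(\bx,t))^{2}}\right).
\]
On $\tilde{\mathcal{D}}(t)$ we have $s(t)-f_i(\bx,t)>0$, and by Assumption \ref{assumption: strong convexity} each $\nabla_{\bx\bx} f_i\succeq 0$, so every summand is positive semidefinite; combined with $\nabla_{\bx\bx} f_0\succeq m\mathbf{I}$ this yields $\nabla_{\bx\bx}\tilde{\Phi}(\bx,t)\succeq m\mathbf{I}$ for all $\bx\in\tilde{\mathcal{D}}(t)$ and $t\geq 0$. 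In particular $\nabla_{\bx\bx}\tilde{\Phi}$ is invertible, so \eqref{eq: perturbed_time_varying_newton_barrier} is well posed in the interior, and the first-order condition $\nabla_{\bx}\tilde{\Phi}(\tilde{\bz}^{\star}(t),t)=0$ characterizes the minimizer \eqref{eq: perturbed_inequality_time_varying_minimizer}.

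Next I would reproduce the gradient-tracking computation of Lemma \ref{lem: Time_Varying_Newton}. Setting $\tilde{g}(t):=\nabla_{\bx}\tilde{\Phi}(\tilde{\bz}(t),t)$ and differentiating along \eqref{eq: perturbed_time_varying_newton_barrier} by the chain rule,
\[
\dot{\tilde{g}}(t)=\nabla_{\bx\bx}\tilde{\Phi}(\tilde{\bz}(t),t)\,\dot{\tilde{\bz}}(t)+\nabla_{\bx t}\tilde{\Phi}(\tilde{\bz}(t),t)=-\mathbf{P}\,\tilde{g}(t),
\]
where the Hessian and its inverse cancel and the two $\nabla_{\bx t}\tilde{\Phi}$ terms subtract. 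Taking $V(t)=\tfrac12\|\tilde{g}(t)\|_2^2$ gives $\dot V=-\tilde{g}^{T}\mathbf{P}\tilde{g}\leq-2\sigma V$, hence $\|\tilde{g}(t)\|_2\leq\|\tilde{g}(0)\|_2 e^{-\sigma t}$. Finally, strong convexity together with $\nabla_{\bx}\tilde{\Phi}(\tilde{\bz}^{\star}(t),t)=0$ gives $\|\tilde{g}(t)\|_2\geq m\|\tilde{\bz}(t)-\tilde{\bz}^{\star}(t)\|_2$, so that
\[
\|\tilde{\bz}(t)-\tilde{\bz}^{\star}(t)\|_2\leq\frac{1}{m}\|\nabla_{\bx}\tilde{\Phi}(\bx_0,0)\|_2\,e^{-\sigma t},
\]
which is \eqref{eq: time_varying_barrier_convergence} with $C(\bx_0,c_0,s_0,m)=m^{-1}\|\nabla_{\bx}\tilde{\Phi}(\bx_0,0)\|_2$; this constant is finite because the choice of $s_0$ in \eqref{eq: slack_initial_value} places $\bx_0$ strictly inside $\tilde{\mathcal{D}}_0$, keeping each $s_0-f_i(\bx_0,0)$ away from zero.

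I expect the main obstacle to be the one step that Lemma \ref{lem: Time_Varying_Newton} never had to confront: global existence and forward invariance. Unlike the unconstrained case, $\tilde{\Phi}$ is only defined on the shrinking set $\tilde{\mathcal{D}}(t)$, and the identity $\dot{\tilde{g}}=-\mathbf{P}\tilde{g}$ holds only so long as $\tilde{\bz}(t)$ stays strictly feasible and $\nabla_{\bx\bx}\tilde{\Phi}$ invertible. I would therefore run the argument on the maximal interval of existence $[0,T)$ and show $T=\infty$: as $\tilde{\bz}(t)$ approaches $\partial\tilde{\mathcal{D}}(t)$ some $s(t)-f_i\to 0^{+}$ forces $\|\nabla_{\bx}\tilde{\Phi}\|_2=\|\tilde{g}\|_2$ to diverge, contradicting the uniform bound $\|\tilde{g}(t)\|_2\leq\|\tilde{g}(0)\|_2$ just derived. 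Extra care is needed because the domain itself moves with $t$ through both $s(t)$ and $f_i(\cdot,t)$, so $\nabla_{\bx t}\tilde{\Phi}$ collects contributions from $\dot{s}(t)$, $\dot{c}(t)$ and $\nabla_{\bx t} f_i$; these remain bounded precisely because feasibility keeps each $s(t)-f_i$ away from zero, which is what closes the bootstrap.
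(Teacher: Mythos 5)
Your proposal follows essentially the same route as the paper's proof: the same Hessian computation establishing $m$-strong convexity of $\tilde{\Phi}(\cdot,t)$, the same chain-rule cancellation giving $\frac{d}{dt}\nabla_{\bx}\tilde{\Phi}(\tilde{\bz}(t),t)=-\mathbf{P}\nabla_{\bx}\tilde{\Phi}(\tilde{\bz}(t),t)$ and hence exponential decay of the gradient, and the same strong-convexity conversion to the distance bound (your constant $1/m$ versus the paper's looser $2/m$ is immaterial). The only difference is that you additionally flag and sketch the forward-invariance/global-existence issue---that the identity is valid only while $\tilde{\bz}(t)$ stays strictly inside the moving domain $\tilde{\mathcal{D}}(t)$---which the paper's proof passes over silently; this is a refinement of the same argument rather than a different approach.
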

Few comments are in order: First, Lemma \ref{lemma: time_varying_barrier_convergence} shows that the solution $\tilde{\bz}(t)$ of the dynamical system \eqref{eq: perturbed_time_varying_newton_barrier} converges exponentially to the sub-optimal solution $\tilde{\bz}^{\star}(t)$ in \eqref{eq: perturbed_inequality_time_varying_minimizer}. Second, Lemma \ref{lemma_suboptimality_bound} guarantees that $\tilde{\bz}^{\star}(t)$ converges to the optimal perturbed solution $\tilde{\bx}^{\star}(t)$ in \eqref{eq: perturbed_inequality_constrained_time_varying_problem} if $c(t)\to \infty$. Finally, Lemma \ref{lemma: pertubation_suboptimality_bounds} confirms that the perturbed solution $\tilde{\bx}^{\star}(t)$ converges to $\bx^{\star}(t)$ of the original problem \eqref{eq: inequality_constrained_time_varying_problem} if $s(t)\to 0$. Therefore, convergence of the dynamical system \eqref{eq: perturbed_time_varying_newton_barrier} to the desired optimal solution $\bx^{\star}(t)$ is guaranteed if $\lim_{t\to\infty} c(t) =\infty$ and $s(t) \to 0$ as $t\to \infty$. The next theorem summarizes these observations as the main result of this section.


\begin{theorem}\label{thm_main_theorem}
	Consider problem \eqref{eq: inequality_constrained_time_varying_problem} with optimal trajectory $\bx^{\star}(t)$ and the corresponding barrier function \eqref{eq: perturbed_time_varying_barrier_function}. Let $\tilde{\bz}(t)$ be the solution of \eqref{eq: perturbed_time_varying_newton_barrier} with arbitrary initial condition $\bx_0 \in \mathbb{R}^n$. Finally, let $c(t) \to \infty$ and $s(t)=s_0 \exp(-\alpha t)$ for some $\alpha>0$ and $s_0$ chosen according to \eqref{eq: slack_initial_value}. Then, $\tilde{\bz}(t) \to \bx^{\star}(t)$ as $t \to \infty$. 
\end{theorem}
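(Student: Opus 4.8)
The plan is to show that the four trajectories involved---the integrated solution $\tilde{\bz}(t)$, the barrier minimizer $\tilde{\bz}^{\star}(t)$, the perturbed optimizer $\tilde{\bx}^{\star}(t)$, and the true optimizer $\bx^{\star}(t)$---collapse onto one another as $t\to\infty$, and then conclude via the triangle inequality
\[
\|\tilde{\bz}(t)-\bx^{\star}(t)\|_2 \le \|\tilde{\bz}(t)-\tilde{\bz}^{\star}(t)\|_2 + \|\tilde{\bz}^{\star}(t)-\tilde{\bx}^{\star}(t)\|_2 + \|\tilde{\bx}^{\star}(t)-\bx^{\star}(t)\|_2.
\]
The first term is handled directly by Lemma \ref{lemma: time_varying_barrier_convergence}, which gives exponential decay $\|\tilde{\bz}(t)-\tilde{\bz}^{\star}(t)\|_2 \le C e^{-\sigma t}\to 0$. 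The difficulty is that Lemmas \ref{lemma: pertubation_suboptimality_bounds} and \ref{lemma_suboptimality_bound} only control the remaining two gaps \emph{in objective value}, not in Euclidean distance, so the bulk of the work is to upgrade these value gaps to distance bounds.

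Second, I would exploit the uniform strong convexity of $f_0$ from Assumption \ref{assumption: strong convexity} together with the first-order optimality of the relevant constrained minimizers. For the middle term, note that $\tilde{\bz}^{\star}(t)\in\tilde{\mathcal{D}}(t)$ is feasible for the perturbed problem \eqref{eq: perturbed_inequality_constrained_time_varying_problem}, whose minimizer is $\tilde{\bx}^{\star}(t)$; the variational inequality at the constrained optimum gives $\nabla_{\bx}f_0(\tilde{\bx}^{\star}(t),t)^{T}(\tilde{\bz}^{\star}(t)-\tilde{\bx}^{\star}(t))\ge 0$, so the strong-convexity lower bound yields $\tfrac{m}{2}\|\tilde{\bz}^{\star}(t)-\tilde{\bx}^{\star}(t)\|_2^2 \le f_0(\tilde{\bz}^{\star}(t),t)-f_0(\tilde{\bx}^{\star}(t),t) \le p/c(t)$ by Lemma \ref{lemma_suboptimality_bound}. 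Since $c(t)\to\infty$, this term vanishes. The identical argument, with $\bx^{\star}(t)$ (which is feasible for the perturbed problem because $f_i\le 0\le s(t)$) playing the role of the comparison point, gives $\tfrac{m}{2}\|\tilde{\bx}^{\star}(t)-\bx^{\star}(t)\|_2^2 \le f_0(\bx^{\star}(t),t)-f_0(\tilde{\bx}^{\star}(t),t)\le \sum_{i=1}^p\lambda_i^{\star}(t)s(t)$ by Lemma \ref{lemma: pertubation_suboptimality_bounds}.

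Third, I would dispatch the last term using the prescribed slack schedule. Substituting $s(t)=s_0\exp(-\alpha t)$ gives $\sum_{i=1}^p\lambda_i^{\star}(t)s(t)=s_0\sum_{i=1}^p\lambda_i^{\star}(t)\exp(-\alpha t)$, and Assumption \ref{assumption: bounded_in_times} forces each of the finitely many summands to zero, so $\|\tilde{\bx}^{\star}(t)-\bx^{\star}(t)\|_2\to 0$. Combining the three vanishing terms in the triangle inequality yields $\tilde{\bz}(t)\to\bx^{\star}(t)$. I expect the main obstacle to be the second step: one must check carefully that the comparison points are genuinely feasible for the constrained problem whose optimizer anchors the strong-convexity expansion, so that the linear gradient term is nonnegative and can be discarded---without this, the value gaps of Lemmas \ref{lemma: pertubation_suboptimality_bounds} and \ref{lemma_suboptimality_bound} cannot be converted into distance bounds. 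A minor accompanying point is the existence and uniqueness of $\tilde{\bx}^{\star}(t)$ and $\tilde{\bz}^{\star}(t)$, which follow from strong convexity of $f_0$ and convexity of the (perturbed) feasible set.
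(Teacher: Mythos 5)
Your proposal is correct and follows the same route as the paper: the paper's own proof is exactly the chain of Lemmas \ref{lemma: pertubation_suboptimality_bounds}, \ref{lemma_suboptimality_bound} and \ref{lemma: time_varying_barrier_convergence}, cited in a single line. In fact, your write-up supplies the one step the paper leaves implicit: Lemmas \ref{lemma: pertubation_suboptimality_bounds} and \ref{lemma_suboptimality_bound} control only objective-value gaps, so concluding $\tilde{\bz}(t)\to\bx^{\star}(t)$ \emph{in norm} requires precisely the upgrade you carry out --- uniform strong convexity of $f_0$ combined with the variational inequality $\nabla_{\bx}f_0(\tilde{\bx}^{\star}(t),t)^{T}(\by-\tilde{\bx}^{\star}(t))\ge 0$ at the constrained minimizer, applied to the comparison points $\tilde{\bz}^{\star}(t)$ and $\bx^{\star}(t)$, whose feasibility for the perturbed problem you correctly verify (the latter because $f_i(\bx^{\star}(t),t)\le 0\le s(t)$). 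Your handling of the final term, substituting $s(t)=s_0e^{-\alpha t}$ and invoking Assumption \ref{assumption: bounded_in_times}, likewise matches the paper's intent. In short, the proposal is not merely correct; it is a more complete rendering of the paper's own argument.
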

\begin{proof}
The proof follows from inequalities \eqref{eq: pertubation_suboptimality_bounds}, \eqref{eq: suboptimality_bound} and \eqref{eq: time_varying_barrier_convergence}.
\end{proof}
We close this section by two remarks.
\begin{remark}
	The logarithmic barrier coefficient $c(t)$ is required to be positive, monotonic increasing, asymptotically converging to infinity, and be bounded in finite time. A convenient choice could be ${c}(t)=c_0 \exp(\alpha t)$ for $\alpha>0$. Notice that the term $\nabla_{\bx t} \Phi(\tilde{\bz}(t),t)$ in \eqref{eq: perturbed_time_varying_newton_barrier} compensates for continuous-time variation of both $c(t)$ and $s(t)$.
\end{remark}
\begin{remark}
	If the Newton differential equation \eqref{eq: perturbed_time_varying_newton_barrier} starts from a strictly feasible initial point, i.e. $\bx_0 \in \mathcal{D}_0$, then $s(t)$ is chosen to be identically zero. From inequality \eqref{eq: pertubation_suboptimality_bounds}, $\tilde{\bx}(t)=\bx(t)$ for $t \geq 0$. Therefore, according to inequalities \eqref{eq: suboptimality_bound} and \eqref{eq: time_varying_barrier_convergence}, exponential convergence of the tracking trajectory $\tilde{\bz}(t)$ to the optimal trajectory $\bx^{\star}(t)$ is guaranteed if the barrier parameter $c(t)$ grows exponentially.
\end{remark}

\section{Numerical experiments}\label{sec_numerical_examples}
%
\begin{figure}\centering
	\includegraphics[width=\linewidth]{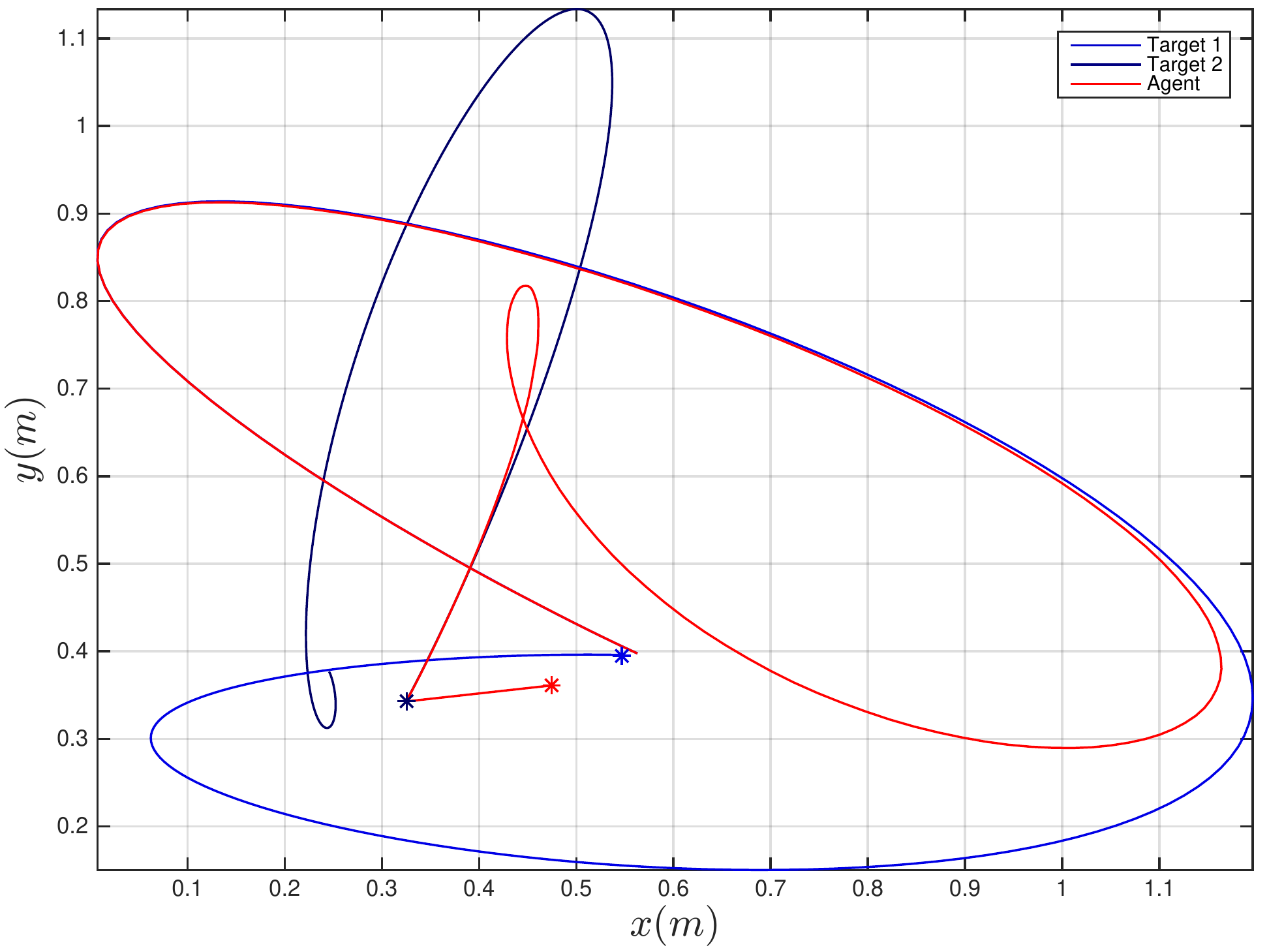} 
	\caption{Trajectory of the agent --in red-- and the targets. The agent starts off with the first target and then switches to the second one. The gain matrix is set to be $\mathbf{P}=10I_2$.}
	\label{fig_trajectory_switched_tracking}\end{figure}
%
%
\begin{figure}\centering
	\includegraphics[width=\linewidth]{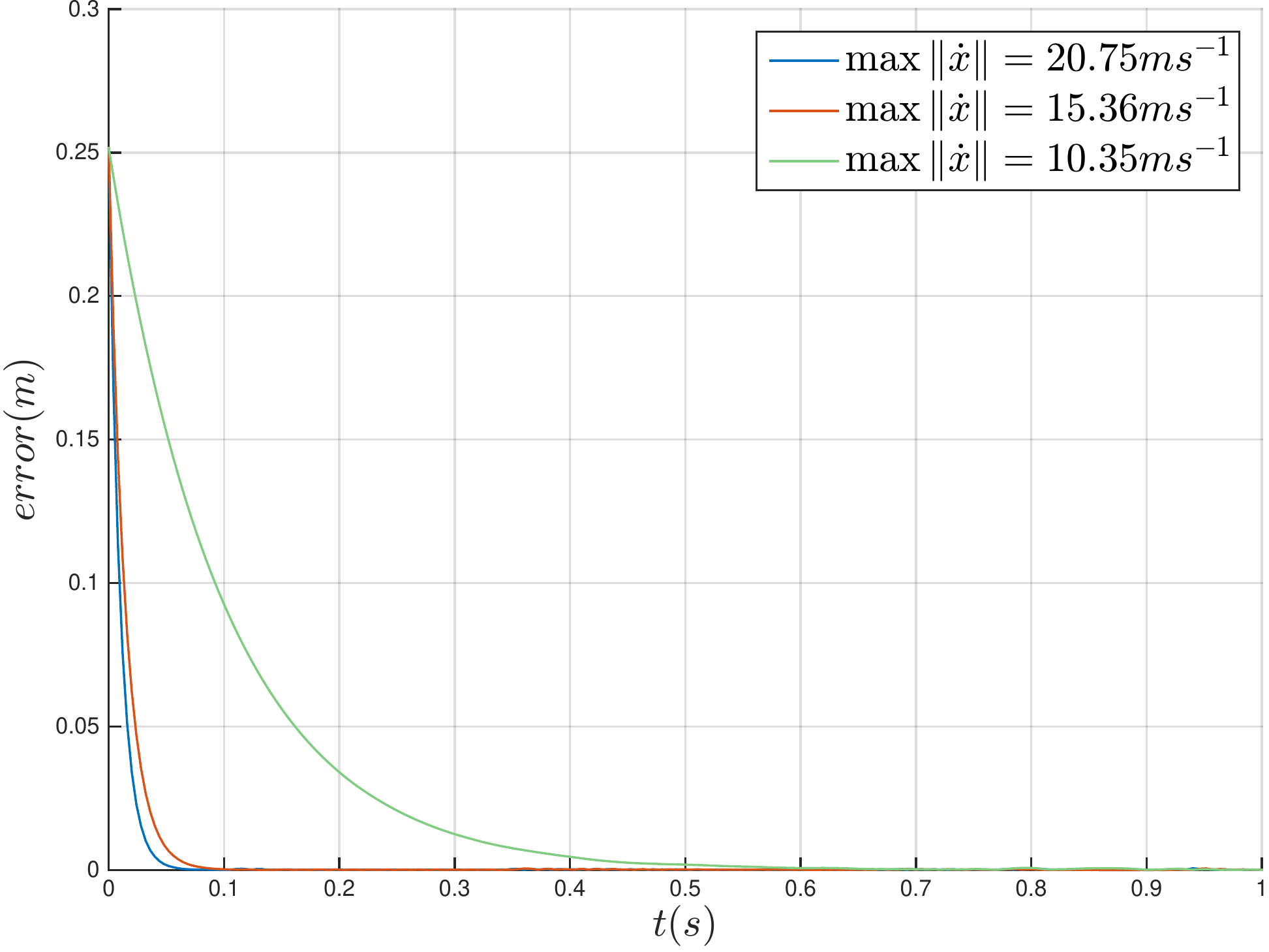} 
	\caption{Tracking error $\| \bx(t) - \bx^*(t)\|$ as a function of time for the unconstrained problem. The optimal solution $\bx^{\star}(t)$ has been computed in discrete times of Euler integration using CVX \cite{grant2008cvx}.}
	\label{fig_double_tracking_error}\end{figure}
%
In this section, we evaluate the performance of the time varying interior point method developed in the previous section by two numerical examples. In Section \ref{sec_unconstrained_minimization} we numerically solve an unconstrained dynamic convex problem and in Section \ref{sec_constrained_optimization} we study a constrained dynamic convex problem. In implementations, we use Euler integration scheme with variable step size to solve the differential equations that generate the solution.

\subsection{Unconstrained optimization}\label{sec_unconstrained_minimization}

Consider an agent charged with the task of tracking two targets sequentially. That is, the agent is required to track the first target on the time interval $[t_0,t_{int}]$ and track the other target on the time interval $[t_{int},t_f]$. If we denote the position of the $i$th target by $\by_i(t)$, the objective function takes the form
\begin{equation}\label{eqn_unconstrained_objective_function}
f_0(\bx, t) = S(t) \| \bx -\by_1(t)\|^2 + (1-S(t))\| \bx-\by_2(t)\|^2,
\end{equation}
where $S(t)$ is a weighting function that determines which target must be tracked. We consider the following differentiable switch
\begin{equation}
S(t) = 1-\frac{1}{1+e^{-\gamma(t-t_{int})}},
\end{equation}  
where $\gamma>0$ controls the speed with which $S(t)$ transitions from one to zero. 

We use a time parametric representation of the trajectories of the targets. Specifically, let $p_j(t)$ be elements of a polynomial basis, the $k$th component of the trajectory of target $i$th is given by
\begin{equation}\label{eqn_sheep_position_unconstrained}
y_{ik} (t) =\sum_{j=0}^{n_i-1} y_{ikj} p_j(t), 
\end{equation}
where $n_i$ is the total number of polynomials that parametrize the path traversed by target $i$ and $y_{ik,j}$ represent the corresponding $n_i$ coefficients. To determine the coefficients $y_{ik,j}$ we draw at random a total of $L$ random points per target $\{\tilde{y}_{\ell}^i\}_{\ell=1}^L$ independently and uniformly in the unit box $[0,1]^2$. Target $i$ is required to pass trough the points $\tilde{y}_{\ell}^i$ at times $\ell t_f/(L+1)$. Paths $y_i(t)$ are then chosen such that the path integral of the acceleration squared is minimized subject to the constraints of each individual path, i.e;
\begin{alignat}{2}\label{eqn_quadratic program_unconstrained}
y_{i}  
= &\arg \min && \int_{0}^T \|\ddot{y}_{i} (t)\|^2 dt,   \nonumber\\
&s.t     && y_i(\ell t_f/(L+1))=\tilde{y}_{\ell}^i \quad \mbox{for all} \quad \ell =0,1\ldots L+1.
\end{alignat}
This problem can be solved by a quadratic program \cite{mellinger2011minimum}. 
In subsequent numerical experiment, we set the number of targets to $m=2$, the time interval to $[0,1]$, and the intermediate switching time $t_{int}=1/2$. We use the standard polynomial basis $p_j(t) = t^j$ in \eqref{eqn_sheep_position_unconstrained} and the degree of the polynomials is set to be $n=30$ for $i=1,\cdots,m$. To generate the target paths, we consider a total of $L=5$ random chosen intermediate points. We further set the parameter controlling the switching speed to $\gamma =20$. For this data, we solve \eqref{eq: time_varying_newton} by Euler integration with variable step size of maximum length $0.01s$. 

The resulting trajectories are illustrated in Figure \ref{fig_trajectory_switched_tracking} when we select the gain matrix  in \eqref{eq: time_varying_newton} to be $\mathbf{P}= 10\mathbf{I}_2$. A qualitative examination of this behavior shows that the agent --in red -- succeeds in tracking the first target up to time $t=0.5 s$ and switching to the second agent after $t=0.5 s$. The objective function considered in \eqref{eqn_unconstrained_objective_function} agrees with Assumption \ref{assumption: strong convexity} and therefore, the hypothesis of Lemma \ref{lem: Time_Varying_Newton} is satisfied. Consequently, exponential  convergence to the optimal solution is guaranteed as it can be observed in Figure \ref{fig_double_tracking_error}. Also, this convergence gets faster as the gain matrix $\mathbf{P}$ is increased.
\subsection{Constrained optimization}\label{sec_constrained_optimization}
%
%
\begin{figure}\centering
	\includegraphics[width=\linewidth]{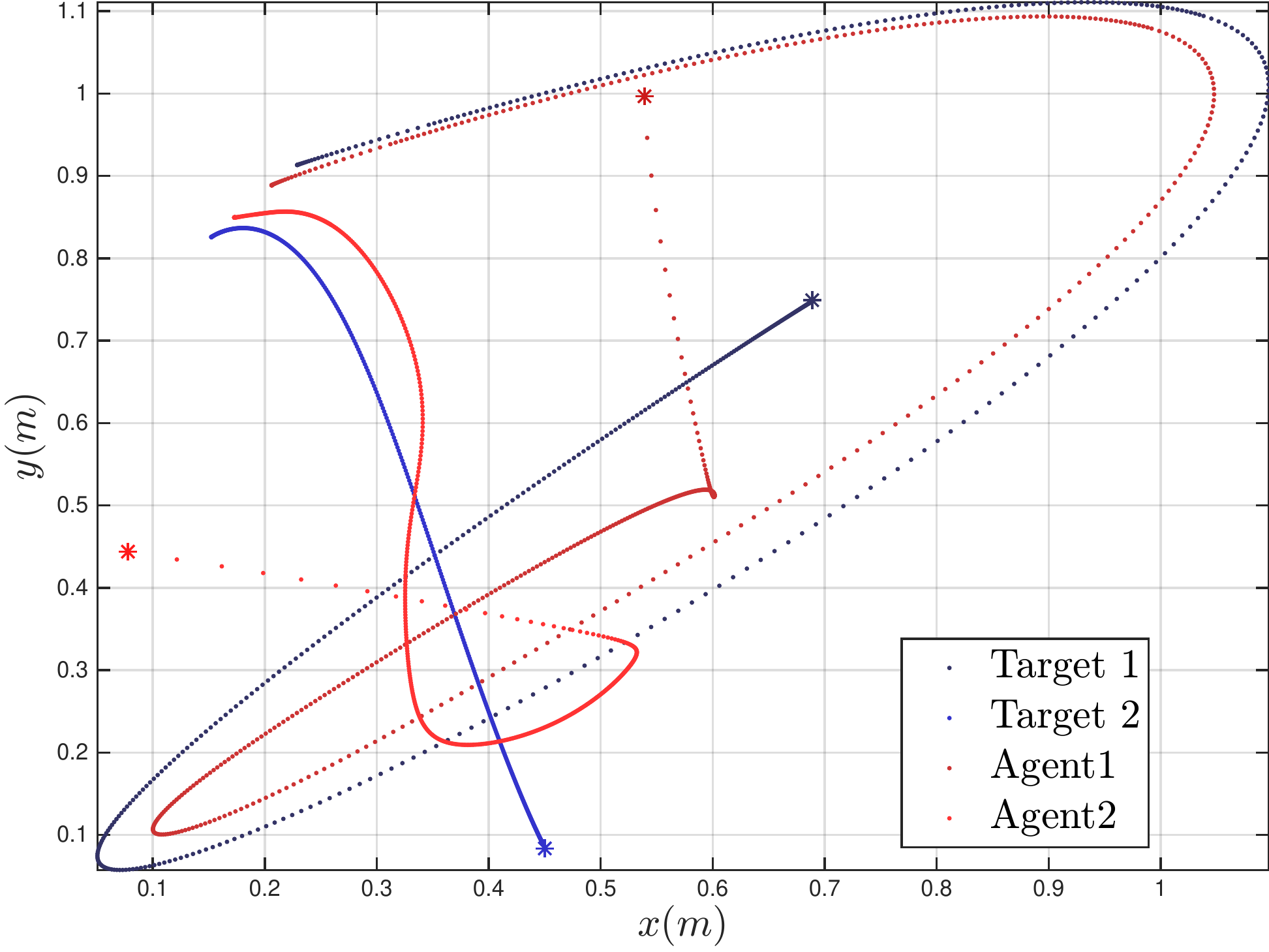} 
	\caption{Trajectory of the agent --in red-- and the targets to track. It can be observed that the agents succeed in following both targets while keeping the distance between them as small as possible. The gain matrix is set to be $\mathbf{P}=50\mathbf{I}_2$.}
	\label{fig_double_tracking_gain50_tol0point05}\end{figure}
\begin{figure}
	\centering
	\begin{subfigure}[b]{\linewidth}
		\includegraphics[width=\textwidth]{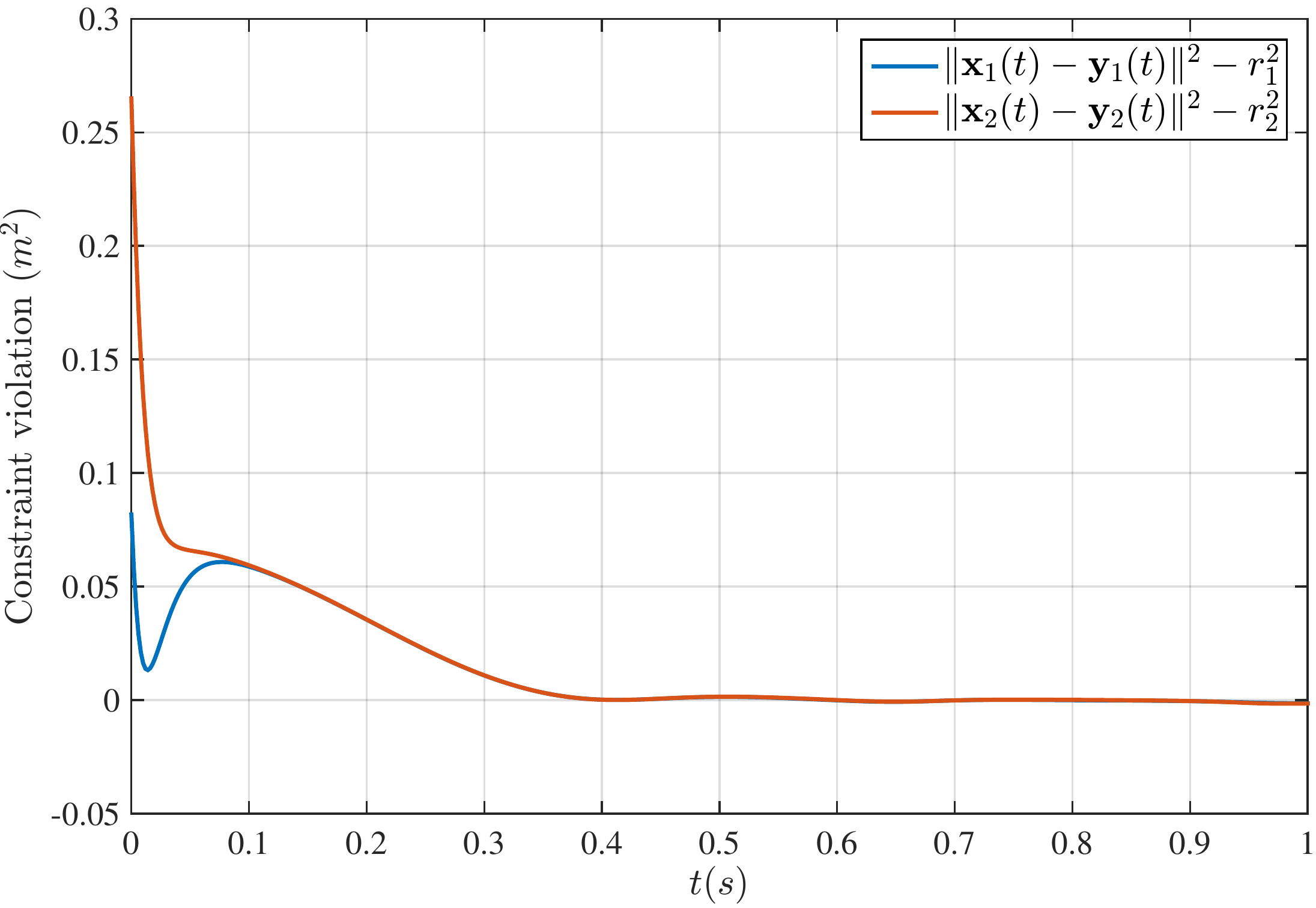}
		\caption{Time evolution of constraint functions $\|\bx_i(t)-\by_i(t)\|_2-r_i^2$ for $i=1,2$.} \label{fig_two_agent_constraint}
	\end{subfigure}\par\vfill \bigskip
	\begin{subfigure}[b]{\linewidth}
		\includegraphics[width=\linewidth]{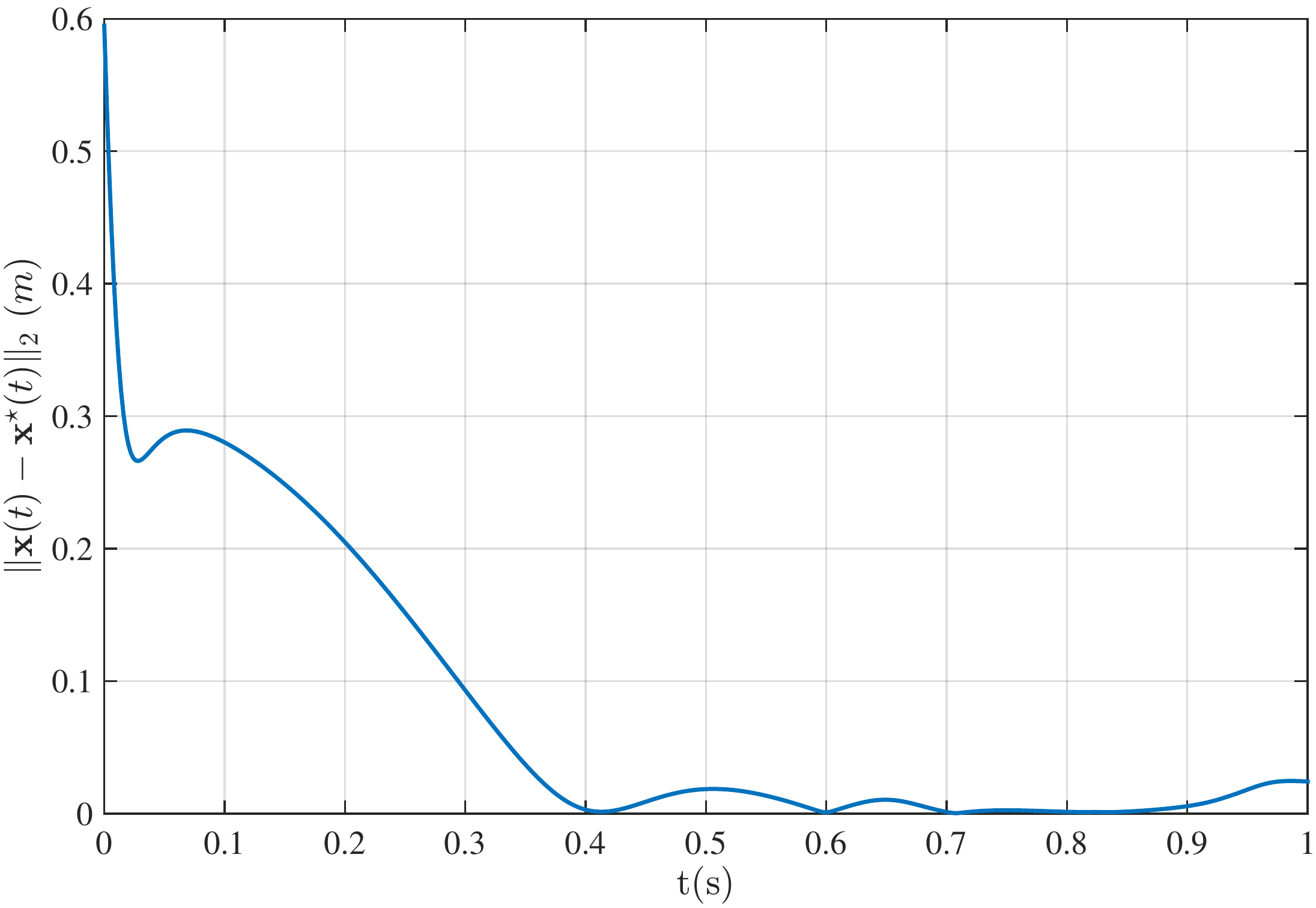}
		\caption{Time evolution of $\left\| [\bx_1(t) - \bx_1^{\star}(x), \bx_2(t)-\bx_2^{\star}(t)]\right\|_2$ where $\bx_1^{\star}(t)$ and $\bx_2^{\star}(t)$ are the optimal trajectories.} \label{fig_two_agent_error}
	\end{subfigure}\bigskip
	\caption{ Evolution of the constraint functions and the tracking error as a function of time. Both the constraint violation and the tracking error converges asymptotically to zero as per Theorem \ref{thm_main_theorem}.} \label{fig_two_agent_optimization}
\end{figure}

We consider two agents charged with the task of staying withing certain distance of two moving targets, while keeping their Euclidean distance as small as possible. Since the position of both agents are optimization variables, the objective function in this problem is not time varying. However, the constraints are. Denote by $\bx_i(t)$ the position vector of agent $i$ and denote by $\by_j(t)$ as the position vector of target $j$. Then, the agents aim to solve the following problem
\begin{equation}\label{eqn_constrained_problem_tracking}
\begin{aligned}
&\min _{\bx_1 \in \mathbb{R}^2, \bx_2 \in \mathbb{R}^2} \| \bx_1 - \bx_2\|^2 \\
&\mbox{s.t} \, \| \bx_i - \by_i(t)\|^2 - r_i^2 \leq 0 \quad \mbox{for} \quad i=1,2.
\end{aligned}
\end{equation}
The trajectories for the targets were computed by the same procedure as in Section \ref{sec_unconstrained_minimization}. The maximum allowable distance to the targets is set to be $r_i = 0.05 m$. The barrier parameter is chosen as $c(t) = e^{\gamma t}$ and the slack parameter as $s(t) = e^{-\alpha t}$ with $\gamma =6$ and $\alpha =10$. For this data, we solve the differential equation \eqref{eq: perturbed_time_varying_newton_barrier} by Euler integration scheme with variable step of maximum length $0.01s$.

In Figure \ref{fig_double_tracking_gain50_tol0point05} the resulting trajectories are depicted. Both agents succeed in following the corresponding target, while keeping their distance small. Figure \ref{fig_two_agent_constraint} illustrates the time evolution of the constraint functions. The value of both constraints converges to zero asymptotically as expected according to Theorem \ref{thm_main_theorem}. It is also expected that the solution of the dynamical system \eqref{eq: perturbed_time_varying_newton_barrier} converges to the optimal solution asymptotically. This convergence is depicted in Figure \ref{fig_two_agent_error}.

\section{Conclusions}\label{se:conclusions}
In this paper, we developed an interior point framework for solving convex optimization problems with time varying objective function and/or time varying constraints. We used barrier penalty functions to relax the constraints and developed a prediction-correction Newton method for solving the corresponding time varying unconstrained problem. Under reasonable assumptions, asymptotic convergence to the optimal solution of the original problem was guaranteed. All time dependences were assumed to be continuous. Numerical examples regarding target tracking applications were considered to illustrate the performance of the developed methods. The numerical results were in accordance with the theoretical findings.
%
\appendix
\section{Appendix}

	\subsection{Proof of Lemma \ref{lem: Time_Varying_Newton}} \label{ap: time_varying_unconstrained_newton}
	
		By Assumption \ref{assumption: strong convexity}(strong convexity), ${\nabla_{\bx\bx} f_0(\bx(t),t)}^{-1}$ is defined for all $t \geq 0$. The time variation of the gradient at $\bx(t)$ can be written as 
		\begin{align} \label{eq: gradient_dynamics}
		\dfrac{d}{dt} \nabla_{\bx} f_0(\bx(t),t)= \nabla_{\bx\bx} f_0(\bx(t),t) \dot{\bx}(t)+ \nabla_{\bx t} f_0(\bx(t),t). 
		\end{align}
		Substituting $\dot{\bx}(t)$ in \eqref{eq: time_varying_newton}, it follows that
		\begin{align}\label{eqn_diff_equation_gradient}
		\dfrac{d}{dt} \nabla_{\bx} f_0(\bx(t),t)=-\mathbf{P} \nabla_{\bx} f_0(\bx(t),t). 
		\end{align}
		This is a first order linear differential equation on $\nabla_{\bx}f_0(\bx(t),t)$. Therefore, the solution of \eqref{eqn_diff_equation_gradient} is 
		\begin{align}
		\nabla_{\bx} f_0(\bx(t),t)=e^{-Pt}\nabla_{\bx} f_0(\bx_0,0).
		\end{align}
		The norm of $\nabla_{\bx} f_0(\bx(t),t)$ can be upper bounded using Cauchy-Schwartz inequality 
		\begin{align}
		\|\nabla_{\bx} f_0(\bx(t),t)\|_2  \leq \|e^{-Pt}\|_2 \|\nabla_{\bx} f_0(\bx_0,0)\|_2.
		\end{align}
		Using the fact that $\sigma \mathbf{I}_n \preceq \mathbf{P}$, we have that $\|e^{-\mathbf{P}t}\|_2=e^{-\sigma t}$, and therefore
		\begin{equation}\label{eqn_lemma1_aux}
		\|\nabla_{\bx} f_0(\bx(t),t)\|_2  \leq e^{-\sigma t} \|\nabla_{\bx} f_0(\bx_0,0)\|_2.
		\end{equation}
		On the other hand, $\|\bx(t) -\bx^*(t)\|_2$ is bounded above by $2\|\nabla_{\bx}f_0(\bx(t),t)  \|_2/m$ due to strong convexity. Hence, it follows that
		\begin{equation} \label{eq: unconstrained_time_varying_convergence_bound}
		\|\bx(t)-\bx^{\star}(t)\|_2  \leq \dfrac{2}{m}\|\nabla_{\bx} f(\bx_0,0)\|_2 e^{-\sigma t}.
		\end{equation}
		The above inequality completes the proof of the lemma. Notice that in fact $C(\bx_0,m) =  2\|\nabla_{\bx}f_0(\bx_0,0)  \|_2/m$ is a bounded nonnegative constant for all initial condition $\bx_0$. 
		
		\subsection{Proof of Lemma \ref{lemma: equality_constrained_time_varyging_newton}} \label{ap: time_varying_equality_newton}

		The Hessian of the Lagrangian \eqref{eq: time_varying_equality_lagrangian} with respect to $\bz=[\bx^T, \ \lambda^T]^T$ is given by
		\begin{align}
		\nabla_{\bz\bz} \mathcal{L}(\bz(t),t)=\begin{bmatrix} \nabla_{\bx\bx} f_0(\bx(t),t) & \mathbf{A}(t)^T \\ \mathbf{A}(t) & 0_{p\times p}
		\end{bmatrix}
		\end{align}
		Strong convexity of $f_0(\bx,t)$ is sufficient for $\nabla_{\bz\bz} \mathcal{L}(\bz,t)$ to be invertible. The rest of the proof follows the same steps as the proof of Lemma \ref{lem: Time_Varying_Newton} by substitutions $\nabla_{\bx} f_0(\bz,t) \leftarrow \nabla_{\bz} \mathcal{L}(\bz,t)$ for the objective function and $\bx \leftarrow \bz$ for the optimization variables.
		
		\subsection{Proof of Lemma \ref{lemma: pertubation_suboptimality_bounds}} \label{ap_perturbation_suboptimality_bound}
		The proof is taken from \cite{boyd2004convex}. 
		Recall that the dual function associated with problem \eqref{eq: inequality_constrained_time_varying_problem} is given by
		\begin{align} \label{eq: time_varying_dual_function}
		g({\lambda}(t),t)= \underset{\bx \in \mathbb{R}^n}{\min} \ f_0(\bx,t)+\sum_{i=1}^{p} \lambda_i(t)f_i(\bx,t),
		\end{align}
		which is concave in $\lambda=[\lambda_1,\cdots,\lambda_p]^T \in \mathbb{R}^{p}_{+}$. At the optimal point $(\bx^{\star}(t),\lambda^{\star}(t))$, it follows by strong duality that		
		\begin{align} \label{eq: timve_varying_strong_duality}
		f_0(\bx^{\star}(t),t)=g(\lambda^{\star}(t),t)
		\end{align}
		On the other hand, for any feasible point $\bx$ of the perturbed problem, i.e. $\bx \in \mathcal{D}_t=\{\bx \in \mathbb{R}^n |\ f_i(\bx,t) \leq s(t) \}$, it must be true that 
		\begin{align} \label{eq: time_varying_weak_duality}
		g(\lambda^{\star}(t),t) &\leq f_0(\bx,t)+\sum_{i=1}^{p} \lambda_i^{\star}(t)f_i(\bx,t) \nonumber \\
		& \leq f_0(\bx,t)+\sum_{i=1}^{p} \lambda_i^{\star}(t)s(t)
		\end{align}
		Substituting \eqref{eq: timve_varying_strong_duality} back in \eqref{eq: time_varying_weak_duality} and setting $\bx=\tilde{\bx}^{\star}(t) \in \mathcal{D}_t$ yields
		\begin{align}
		f_0(\bx^{\star}(t),t) \leq f_0(\tilde{\bx}^{\star}(t),t)+\sum_{i=1}^{p} \lambda_i^{\star}(t)s(t)
		\end{align} 				
		It remains to prove that $f_0(\tilde{\bx}^{\star}(t),t) \leq f_0(\bx^{\star}(t),t)$. This follows from the fact that for $0 \leq s$, the feasible set is enlarged, causing reduction in the optimal value. The proof is complete.
		\subsection{Proof of Lemma \ref{lemma_suboptimality_bound}}\label{ap_suboptimality_bound}
		The optimal trajectory $\tilde{\bz}^{\star}(t)$ solution to the problem \eqref{eq: perturbed_inequality_time_varying_minimizer} is characterized by $\nabla_{\bx} \Phi(\tilde{\bz}^{\star}(t),t)=0$ for all $t \in [0,\infty)$, or equivalently
		\begin{align}	\label{eq: zero_gradient_time_varying_barrier_lagrangian}
		\nabla_{\bx} f_0(\tilde{\bz}^{\star}(t),t)+\dfrac{1}{c(t)}\sum_{i=1}^{p}\dfrac{1}{s(t)-f_i(\tilde{\bz}^{\star}(t),t)}\nabla f_i(\tilde{\bz}^{\star}(t),t)=0,
		\end{align}
		Define the following functions
		\begin{equation}\label{eqn_lambda_t}
		\tilde{\lambda}_i^{\star}(t) = \dfrac{1}{c(t)}\dfrac{1}{s(t)-f_i(\tilde{\bz}^{\star}(t),t)},\ i=1,\cdots,p.
		\end{equation} 
		Notice that $\tilde{\lambda}_i^{\star}(t)>0$ for all $t\geq 0$ and $i \in \{1,\ldots p\}$ since the optimal point is always in the domain, i.e. $f_i(\tilde{\bz}^{\star}(t),t)<s(t), \ i\in \{1,\ldots,p\}$. 
		On the other hand, the dual function of the perturbed problem \eqref{eq: perturbed_inequality_constrained_time_varying_problem} is given by
		\begin{align} \label{eq: perturbed_time_varying_dual_function}
		\tilde{g}(\lambda(t),t) = \underset{\bx \in \mathbb{R}^n}{\min} \ f_0(\bx,t)+\sum_{i=1}^{p} \lambda_i(t)(f_i(\bx,t)-s(t)),
		\end{align}			
		It follows from \eqref{eq: zero_gradient_time_varying_barrier_lagrangian} that the pair $(\tilde{\bz}^{\star}(t),\tilde{\lambda}^{\star}(t))$ satisfies the identity \eqref{eq: perturbed_time_varying_dual_function}. Therefore, we have that
		\begin{align} \label{eq: dual_at_lambda_star}
		\tilde{g}(\tilde{\lambda}^{\star}(t),t) &=f_0(\tilde{\bz}^{\star}(t),t)+\sum_{i=1}^{p} \dfrac{1}{c(t)}\frac{f_i(\tilde{\bz}^{\star}(t),t)-s(t)}{s(t)-f_i(\tilde{\bz}^{\star}(t),t)} \nonumber \\
		&=f_0(\tilde{\bz}^{\star}(t),t)-\dfrac{p}{c(t)}
		\end{align}
		On the other hand, the dual function provides a lower bound for the optimal solution (see e.g. \cite{boyd2004convex}) 
		\begin{align} \label{eq: weak_duality} 
		\tilde{g}(\lambda,t) \leq f_0(\tilde{\bx}^{\star}(t),t),\ \forall \lambda \in \mathbb{R}_{+}^{p},\ \forall t \geq 0.
		\end{align}
		In particular, the above inequality holds for $\tilde{\lambda}^{\star}(t)$. Substituting \eqref{eq: dual_at_lambda_star} in \eqref{eq: weak_duality} results in
		\begin{equation}
		f_0(\tilde{\bz}^{\star}(t),t) - \frac{p}{c(t)} \leq f_0(\tilde{\bx}^{\star}(t),t),
		\end{equation}
		which is the desired bound \eqref{eq: suboptimality_bound}.
		\subsection{Proof of Lemma \ref{lemma: time_varying_barrier_convergence}} \label{ap: lemma_time_varying_barrier_convergence}
				The Hessian of $\tilde{\Phi}(\bx,t)$ can be written as
				\begin{align}
				\nabla_{\bx\bx} \tilde{\Phi}=\nabla_{\bx\bx} f_0+\dfrac{1}{c}\sum_{i=1}^{m} \dfrac{\nabla_{\bx}f_i{\nabla_{\bx}f_i}^T}{(s-f_i)^2}+\dfrac{\nabla_{\bx\bx}f_i}{s-f_i}
				\end{align}
				Since $f_0(\bx,t)$ is strongly convex, and $c(t)$ is strictly positive, it follows that $\nabla_{\bx\bx}\tilde{\Phi}$ is $m$-strongly convex for all $t\in[0,\infty)$. The dynamics of $\nabla_{\bx} \tilde{\Phi}$ at point $(\tilde{\bz}(t),t)$ can be written as
				\begin{align}\label{eqn_gradient_dynamics}
				\dfrac{d}{dt} \nabla_{\bx} \tilde{\Phi}(\tilde{\bz}(t),t) =\nabla_{\bx\bx} \tilde{\Phi}(\tilde{\bz}(t),t) \dot{\tilde{\bz}}(t)+ \nabla_{\bx t} \tilde{\Phi}(\tilde{\bz}(t),t).
				\end{align}
				Substituting the dynamics \eqref{eq: perturbed_time_varying_newton_barrier} into the last result admits
				\begin{equation}
				\dfrac{d}{dt} \nabla_{\bx} \tilde{\Phi}(\tilde{\bz}(t),t) =-\mathbf{P}\nabla_{\bx} \tilde{\Phi}(\tilde{\bz}(t),t). 
				\end{equation}
				This implies in turn that 
				\begin{equation}\label{eqn_theo1_aux}
				\|\nabla_{\bx} \tilde{\Phi}(\tilde{\bz}(t),t)\|_{2} \leq e^{-\sigma t}\|\nabla_{\bx} \tilde{\Phi}(\bx_0,0)\|_2
				\end{equation}
				where 
				\begin{equation}
				\left\|\nabla_{\bx} \tilde{\Phi}(\bx_0,0)\right\|_2=\|\nabla_{\bx} f_0(\bx_0,0)+\dfrac{1}{c_0}\sum_{i=1}^{p} \dfrac{\nabla_{\bx} f_i(\bx_0,0)}{s_0-f_i(\bx_0,0)}\|_2 < \infty.
				\end{equation}
				On the other hand, at each time $t$, $\tilde{\Phi}(\bx,t)$ is $m$-strongly convex. Therefore, it follows that
				\begin{align} \label{eqn_theo1_aux_2}
				\|\tilde{\bz}(t)-\tilde{\bx}^{\star}(t)\|_2 \leq \dfrac{2}{m} \|\nabla_{\bx} \tilde{\Phi}(\tilde{\bz}(t),t)\|_{2}, 
				\end{align}
				Substituting \eqref{eqn_theo1_aux} in \eqref{eqn_theo1_aux_2} gives the desired inequality
				\begin{equation}
				\|\tilde{\bz}(t)-\tilde{\bx}^{\star}(t)\|_2\leq \dfrac{2}{m} \|\nabla_{\bx} \tilde{\Phi}(\bx_0,0)\|_{2} e^{-\sigma t}.
				\end{equation}
				The proof is complete.

\bibliographystyle{ieeetrans}
\bibliography{Refs}

\end{document}